\numberwithin{equation}{section}
\newcommand{\R}{{\mathbb R}}
\newcommand{\eps}{\varepsilon}
\renewcommand{\theta}{\vartheta}
\numberwithin{equation}{section}
\newtheorem{theorem}{Theorem}[section]
\newtheorem{proposition}[theorem]{Proposition}
\newtheorem{lemma}[theorem]{Lemma}
\newtheorem{remark}[theorem]{Remark}
\newtheorem{definition}[theorem]{Definition}
\theoremstyle{definition}
\title[Symmetry of minimizers for quasi-linear problems]{On the symmetry of minimizers in \\
constrained quasi-linear problems}
\author{Hichem Hajaiej}
\address{Department of Mathematics
\newline\indent
King Saud University, College of Sciences
\newline\indent
P.O. Box 2455, 11451 Riyadh, Saudi Arabia}
\email{hichem.hajaiej@gmail.com}
\author{Marco Squassina}
\address{Dipartimento di Informatica
\newline\indent
Universit\`a degli Studi di Verona
\newline\indent
C\'a Vignal 2, Strada Le Grazie 15, I-37134 Verona, Italy}
\email{marco.squassina@univr.it}
\thanks{The First author was partially supported by PRIN: {\em Metodi Variazionali e Topologici
nello Studio di Fenomeni non Lineari.}  The author was partially
supported by the Tunisian ARUB project : {\em Analyse Math\'ematique
et Applications 04/UR/15-02. }}
\begin{document}

\subjclass[2000]{74G65; 35J62; 35A15; 35B06; 58E05}

\keywords{Symmetrization techniques, quasi-linear elliptic equations, non-smooth analysis}

\begin{abstract}
We provide a simple proof of the radial symmetry of any nonnegative minimizer for a general
class of quasi-linear minimization problems.
\end{abstract}
\maketitle

\section{Introduction and main result}

Let $\Omega$ be either $\R^N$ or a ball $B_R(0)$ centered at the origin in $\R^N$, and define the
functional ${\mathcal E}:W^{1,p}_0(\Omega)\to\R\cup\{+\infty\}$, $1<p<N$, by setting
$$
{\mathcal E}(u)
=\int_\Omega j(u,|Du|)-\int_\Omega F(|x|,u).
$$
Moreover, let ${\mathcal C}\subset W^{1,p}_0(\Omega)$ be a constraint given by
\begin{equation}
    \label{ilvincolo}
{\mathcal C}=\Big\{u\in W^{1,p}_0(\Omega): \,\,\int_\Omega G(u)=1 \Big\}.
\end{equation}
Let us consider the following minimization problem
\begin{equation}
    \label{minpb}
m=\inf_{v\in {\mathcal C}}{\mathcal E}(v), \quad -\infty<m<+\infty.
\end{equation}
A classical problem in the Calculus of Variations is to establish the existence of a solution
to problem~\eqref{minpb} and, in addition, to detect further qualitative properties of the solutions
such as their radial symmetry and monotonicity~\cite{brezis}.
The existence of solutions was extensively investigated, starting from the seminal
contributions of {\sc Lions} \cite{lions1,lions2}.
The main strategies followed to achieve the
latter goal are, on one hand, the moving plane method by {\sc Gidas}, {\sc Ni} and {\sc Nirenberg}~\cite{gnn} and,
on the other, the symmetrization techniques, initiated by {\sc Steiner} and {\sc Schwarz} for sets,
for which we refer the reader to the monographs~\cite{bandle,kawohl,mossino} and the classic~\cite{polya}.
For the semi-linear case $p=2$, $j(s,t)=|t|^2$ and $F=0$, a pioneering study was performed by
{\sc Berestycki} and {\sc Lions} in the celebrated paper~\cite{berlio}.
General radial symmetry results for $j(s,t)=|t|^2$ have been obtained by {\sc Lopes} in~\cite{lopes} via a reflection
argument and a unique continuation principle.
For $j(s,t)=|t|^p$, interesting results have been achieved by {\sc Brock} in~\cite{brock0}
by exploiting rearrangements and strong maximum principle.
For further relevant generalization of these contributions, we refer to the recent work of {\sc Mari\c s}~\cite{maris}.
The works \cite{brock0,lopes,maris} include the case of systems as well and \cite{brock0,maris}
also allow multiple constraints (very general in~\cite{maris}).
The existence of a Schwarz symmetric solution of problem~\eqref{minpb}
under general assumptions on $F$ and $j(u,|Du|)$, allowing growth conditions such as
$$
\alpha_0|Du|^p\leq j(u,|Du|)\leq \alpha(|u|)|Du|^p,\quad\,\, \alpha_0>0,\,\,\,
\text{$\alpha:\R^+\to\R^+$ continuous},
$$
has been recently established by virtue of generalized {\sc P\'olya-Szeg\"o} inequalities \cite{HSq}.
In this paper, focusing on the highly quasi-linear character of our minimization problem, we want to provide,
under rather weak assumptions, a quite simple proof that {\em any} given nonnegative
minimum $v$ of~\eqref{minpb} is radially symmetric and decreasing, after a translation, if the
set of critical points of $v^*$ has null Lebesgue measure. In general,
assuming for instance that $j$ is convex in the gradient and $F$ behaves smoothly,
${\mathcal E}$ is non-smooth unless $j_u=0$ and, depending upon the growth estimates
on $j$, it can be either continuous (if $\alpha$ is bounded from above) or lower semi-continuous.
In turn, quite often, techniques of non-smooth analysis are employed.

Given a nonnegative solution $v$ to~\eqref{minpb},
the idea is to construct a related sequence $(v_n)$ (built up by repeatedly polarizing
$v$) which is weakly convergent to the Schwarz symmetrization $v^*$ of $v$ in
$W^{1,p}_0(\Omega)$. Then, since $(v_n)$ are also solutions to~\eqref{minpb} they satisfy an Euler-Lagrange
equation in a suitable generalized sense (see Section~\ref{nonsmoothsect} and, in particular,
Proposition~\ref{semic-lagrange}) obtained by tools of
subdifferential calculus for nonsmooth functionals developed by {\sc Campa} and {\sc Degiovanni} in~\cite{campad}. This allows, in turn, to
prove the almost everywhere convergence of the gradients $Dv_n$ to $Dv^*$ by applying a powerful result due to {\sc Dal Maso}
and {\sc Murat}~\cite{masmur} to a suitable sequence of Leray-Lions type operators associated with $j(v_n,|Dv_n|)$.
Finally, this leads to the identity $\|Dv\|_{L^p(\Omega)}=\|Dv^*\|_{L^p(\Omega)}$
which provides the desired conclusion that $v$ is nothing but a translation of $v^*$.
We stress that, in proving the main result, we never use any form of the strong maximum principle
or unique continuation principle.
Identity cases for the $p$-Laplacian have been
deeply studied since the first pioneering contributions due to {\sc Friedman} and {\sc McLeod} \cite{fmc}
and to {\sc Brothers} and {\sc Ziemer} \cite{bzim}. For some recent developments, extensions and new simplified proofs, we refer the
reader to the works of {\sc Ferone} and {\sc Volpicelli} (see~\cite{FV,FV1} covering both the case of $\R^N$ and of
a bounded domain).

Beyond the study of minima, for an investigation of radial symmetry of {\em minimax} critical points
for a class of quasi-linear problems on the ball associated with lower
semi-continuous functionals involving $j(u,|Du|)$, we refer
to~\cite{squassi-radial} (see also~\cite{jvsh1} for the case of $C^1$ functionals).
We also refer to the monograph~\cite{sq-monograph} and to the references therein
for a wide range of results on quasi-linear problems obtained via non-smooth critical point theory.
\vskip4pt
\noindent
Throughout the paper, the spaces $L^q(\Omega)$ and $W^{1,p}_0(\Omega)$, for every $p,q\geq 1$, will be
endowed, respectively, both for $\Omega=B_R(0)$ or $\Omega=\R^N$, with the usual norms
$$
\|u\|_{L^q(\Omega)}=\Big(\int_\Omega |u|^q\Big)^{1/q},\quad\,
\|u\|_{W^{1,p}_0(\Omega)}=\Big(\|u\|^p_{L^p(\Omega)}
+\sum_{j=1}^N\|D_ju\|^p_{L^p(\Omega)}\Big)^{1/p}.
$$
\vskip2pt
\noindent
Next we formulate the assumptions under which our main result will hold.

\subsubsection{Assumptions on $j$}
For every $s$ in $\R$, the function ($t\in\R^+$)
\begin{equation}\label{j1}
\text{$\{t\mapsto j(s,t)\}$
is strictly convex and increasing}.
\end{equation}
The functions $j_s$ and $j_t$ and $j_{st}$
denote the derivatives of $j(s,t)$ with
respect to the variables $s$ and $t$ and the mixed derivative
respectively, which exist continuous. We assume that there exist a positive constant $\alpha_0$ and
increasing functions $\alpha,\,\beta,\,\gamma\in C(\R^+,\R^+)$ such that
\begin{align}
    \label{j2}
\alpha_0|\xi|^p\leq  j(s,|\xi|) &\leq \alpha(|s|)|\xi|^p, \qquad\text{for every $s$ in $\R$ and $\xi\in \R^N$,} \\
    \label{j3}
|j_s(s,|\xi|)| &\leq \beta(|s|)|\xi|^p,  \qquad\text{for every $s$ in $\R$ and $\xi\in \R^N$,} \\
\label{j4}
|j_t(s,|\xi|)| &\leq \gamma(|s|)|\xi|^{p-1}, \qquad\text{for every $s$ in $\R$ and $\xi\in \R^N$.}
\end{align}

\subsubsection{Assumptions on $F$}
$F(|x|,s)$ is the primitive with respect to $s$ of a Carath\'eodory function $f(|x|,s)$
with $F(|x|,0)=0$. Denoting $p^*=Np/(N-p)$, we assume that there exist a positive constant $C$ and
a radial function $a\in L^{Np/(N(p-1)+p)}(\Omega)$ such that
\begin{align}
\label{gg1}
|f(|x|,s)| &\leq a(|x|)+C|s|^{p^*-1},\qquad\text{for every $s$ in $\R$ and $x\in \Omega$,} \\
\label{gg5}
f(|x|,s) &\geq f(|y|,s),\qquad\text{for every $s\in\R^+$ and $x,y\in \Omega$ with $|x|\leq |y|$}.
\end{align}

\subsubsection{Assumptions on $G$}
$G(s)$ is the primitive with respect to $s$ of a continuous function $g$
with $G(0)=0$. Moreover, there exists a positive constant $C$ such that
\begin{align}
\label{gg1-G}
|g(s)| &\leq C|s|^{p-1}+C|s|^{p^*-1},\qquad\text{for every $s$ in $\R$,} \\
\label{nondegen}
&\text{if $u\in W^{1,p}_0(\Omega)$ and $u\not\equiv 0$,\, then $g(u)\not\equiv 0$}.
\end{align}
For a given positive $u\in W^{1,p}_0(\Omega)$, we consider the set
$$
C^*:=\{x\in\Omega:|D u^*(x)|=0\}\cap (u^*)^{-1}(0,{\rm esssup}\, u).
$$
\noindent
Under the previous assumptions \eqref{j1}-\eqref{gg1-G}, the main result of the paper is the following

\begin{theorem}
    \label{mainthALL}
Assume that $\Omega$ is either $\R^N$ or a ball $B_R(0)\subset \R^N$ and
let $u\in {\mathcal C}$ be any nonnegative solution to~\eqref{minpb} such that
${\mathcal L}^N(C^*)=0$. Then, after a translation, $u=u^*$.
\end{theorem}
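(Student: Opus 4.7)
The strategy is to realize $u^*$ as the weak $W^{1,p}_0$-limit of a sequence of iterated polarizations of $u$, each of which is again a minimizer, to exploit their Euler--Lagrange equations through a Dal Maso--Murat compactness argument to upgrade this to pointwise convergence of the gradients, and finally to identify $u$ with a translate of $u^*$ using the hypothesis $\mathcal{L}^N(C^*)=0$.

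First I would verify that polarization preserves the class of minimizers. For any affine half-space $H\subset \R^N$ containing the origin, and writing $u^H$ for the polarization of a nonnegative $u\in W^{1,p}_0(\Omega)$, equimeasurability gives $\int_\Omega G(u^H)=\int_\Omega G(u)=1$; assumption \eqref{gg5} together with the radial form of $F$ yields $\int_\Omega F(|x|,u^H)\geq\int_\Omega F(|x|,u)$; and the convexity-monotonicity of $j$ from \eqref{j1}, \eqref{j2}, combined with the standard polarization inequality for Dirichlet-type integrands $j(s,|\xi|)$, produces $\int_\Omega j(u^H,|Du^H|)\leq\int_\Omega j(u,|Du|)$. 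Hence $\mathcal{E}(u^H)\leq\mathcal{E}(u)=m$ and $u^H$ is itself a minimizer.

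Next, by the classical density result for polarizations, I can choose half-spaces $H_1,H_2,\dots$ so that the iterated polarizations $u_n:=(\cdots(u^{H_1})^{H_2}\cdots)^{H_n}$ converge to $u^*$ in $L^p(\Omega)$; the uniform $W^{1,p}_0$-bound follows from $L^p$-equimeasurability, the polarization inequality for $j$, and the coercivity $j(s,|\xi|)\geq \alpha_0|\xi|^p$ of \eqref{j2}. Along a subsequence, $u_n\convd u^*$ in $W^{1,p}_0(\Omega)$. Each $u_n$ being a minimizer, Proposition~\ref{semic-lagrange} yields a generalized Euler--Lagrange equation of Leray--Lions type for $u_n$, with Lagrange multipliers $\lambda_n$ kept bounded through \eqref{nondegen} and the uniform $W^{1,p}_0$-control. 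The vector fields $A_n(x,\xi):=j_t(u_n(x),|\xi|)\xi/|\xi|$ form a Leray--Lions family with equi-bounded coefficients (using an $L^\infty$-bound on $u_n$ that is inherited from $u$ by equimeasurability), and the right-hand sides converge strongly in $W^{-1,p'}(\Omega)$ by \eqref{gg1} and \eqref{gg1-G}. The Dal Maso--Murat compactness theorem then delivers $Du_n\to Du^*$ a.e.\ in $\Omega$, and passage to the limit in $\mathcal{E}(u_n)=m$ forces all polarization inequalities to become equalities; in particular, $\|Du\|_{L^p(\Omega)}=\|Du^*\|_{L^p(\Omega)}$.

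The last, and most delicate, step is the identification of $u$ with a translate of $u^*$. The Pólya--Szegő equality, combined with the hypothesis $\mathcal{L}^N(C^*)=0$, places the argument exactly within the Brothers--Ziemer identity framework (sharpened by Ferone--Volpicelli for both $\R^N$ and balls), which yields $u=u^*$ up to a translation. The condition on $C^*$ is precisely what rules out the plateau counterexamples and is used only at this final stage, which I expect to be the principal technical obstacle.
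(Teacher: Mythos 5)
Your strategy is essentially the paper's: polarize $u$ iteratively to approximate $u^*$ weakly, observe that each polarization remains a minimizer (so it satisfies a generalized Euler--Lagrange equation with a bounded multiplier), apply Dal Maso--Murat to upgrade to a.e.\ convergence of gradients, and then invoke the Brothers--Ziemer/Ferone--Volpicelli identity theory. Two points, however, are genuine gaps rather than omitted details.

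First, you assert that the Leray--Lions coefficients are equi-bounded ``using an $L^\infty$-bound on $u_n$ inherited from $u$ by equimeasurability.'' No such bound exists: the hypotheses only put $u\in W^{1,p}_0(\Omega)$, not $u\in L^\infty(\Omega)$, and the growths in \eqref{j3}--\eqref{j4} are through the increasing functions $\beta,\gamma$ of $|s|$, which are uncontrolled if $u$ is unbounded. This is exactly why the paper never uses $A_n(x,\xi)=j_t(u_n,|\xi|)\xi/|\xi|$ directly, but rather the truncated field $b_n(x,\xi)=H(u_n/h)\,j_t(u_n,|\xi|)\xi/|\xi|$, with $H$ a compactly supported cut-off and test functions of the form $H(u_n/h)v$ (which is also the device needed to make these admissible elements of $V_{u_n}$). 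Correspondingly the lower-order terms are split into a strongly $W^{-1,p'}$-convergent part and a mere Radon measure sequence $(\mu_n)$, and Dal Maso--Murat is applied on the sets $\{u^*\le h\}\cap\Omega_0$, letting $h\to\infty$ and $\Omega_0\uparrow\Omega$ afterward. Without this truncation your Leray--Lions structure and your right-hand-side convergence both fail as stated.

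Second, the passage from a.e.\ gradient convergence to $\|Du\|_{L^p}=\|Du^*\|_{L^p}$ is not a ``limit in $\mathcal{E}(u_n)=m$'' (that quantity is constant in $n$); it needs the energy identity $\int_\Omega j(u_n,|Du_n|)=\int_\Omega j(u^*,|Du^*|)$ combined with Fatou's lemma applied to the nonnegative functions $j(u_n,|Du_n|)-\alpha_0|Du_n|^p$, which yields $\limsup_n\|Du_n\|_{L^p}^p\le\|Du^*\|_{L^p}^p$, and then weak lower semicontinuity gives $\|Du_n\|_{L^p}\to\|Du^*\|_{L^p}$; since polarization preserves the gradient $L^p$-norm exactly, $\|Du\|_{L^p}=\|Du^*\|_{L^p}$. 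Note also that polarization gives \emph{equality} $\int_\Omega j(u^H,|Du^H|)=\int_\Omega j(u,|Du|)$ (Proposition~\ref{concretelem}, equation~\eqref{fundineq}), not merely $\le$; while the inequality version would still show $u^H$ is a minimizer, the equality is what feeds cleanly into the Fatou step. Once these two points are repaired, the final identification via $\mathcal{L}^N(C^*)=0$ goes through as you describe.
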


If the problem is not set in a ball or on the whole space, in general minima could fail to be radially symmetric,
even though the domain is invariant under rotations.
For instance, {\sc Esteban}~\cite{est} showed that, if $2<m<2^*$
and $B$ is a closed ball in $\R^N$, then the problem
$$
\min_{u\in H^1(\R^N\setminus{B})}\Big\{\int_{\R^N\setminus{B}}(|D u|^2+|u|^2):
\int_{\R^N\setminus{B}}|u|^{m}=1\Big\}
$$
admits a solution but {\em no} solution is radially symmetric. See also the discussion
by {\sc Kawohl} in \cite[Example 6 and related references]{kawohl-int}
for similar situations of non-symmetric solutions when the problem is defined on an annulus.

Also, as pointed out by {\sc Brothers} and {\sc Ziemer}~\cite[see Section 4]{bzim}
with a counterexample, the condition ${\mathcal L}^N(C^*)=0$
is necessary in order to ensure that $\|Dv\|_{L^p(\Omega)}=\|Dv^*\|_{L^p(\Omega)}$ implies
that $v$ is a translation of $v^*$.

In the particular case where $j(s,t)=|t|^p$, the conclusion of Theorem~\ref{mainthALL} easily follows directly
from identity cases for the $p$-Laplacian operator. In fact, if $u\in {\mathcal C}$ is a nonnegative solution to the minimization problem
and $u^*$ is the Schwarz symmetrization of $u$, then of course $u^*$ belongs
to the constraint ${\mathcal C}$ too (Cavalieri's principle). Moreover, in light of the classical P\'olya-Szeg\"o inequality
and~\eqref{Fsymmm} of Proposition~\ref{concretelem-star}, we have
\begin{equation}
    \label{polya-hardy}
\int_\Omega |D u^*|^p\leq \int_\Omega |D u|^p,\qquad  \int_\Omega F(|x|,u)\leq \int_\Omega F(|x|,u^*).
\end{equation}
Hence,
$$
m\leq {\mathcal E}(u^*)=\int_\Omega |D u^*|^p-\int_\Omega F(|x|,u^*)\leq
\int_\Omega |D u|^p-\int_\Omega F(|x|,u)=m.
$$
In turn, by~\eqref{polya-hardy}, we have both $\|D u^*\|_{L^p(\Omega)}=\|D u\|_{L^p(\Omega)}$
and $\int_\Omega F(|x|,u)=\int_\Omega F(|x|,u^*)$. Then, by~Proposition~\ref{identityc}, if ${\mathcal L}^N(C^*)=0$, there is
a translate of $u^*$ which is equal to $u$.
In the full quasi-linear case, the P\'olya-Szeg\"o inequality (cf.~Proposition~\ref{concretelem-star})
\begin{equation*}
\int_\Omega j(u^*,|D u^*|)\leq \int_\Omega j(u,|D u|)
\end{equation*}
holds as well when $j(u,|D u|)\in L^1(\Omega)$, and the above argument would lead to the identity
\begin{equation}
    \label{idgeneral}
\int_\Omega j(u^*,|D u^*|)=\int_\Omega j(u,|D u|).
\end{equation}
It is not clear (we set it as an open problem) if~\eqref{idgeneral} plus ${\mathcal L}^N(C^*)=0$,
could yield {\em directly} the conclusion that there is
a translate of $u^*$ which is almost everywhere equal to $u$. According to~\cite[Corollary 3.8]{HSq}, this would
hold true knowing in advance that $(u_n)\subset W^{1,p}_0(\Omega)$, $u_n\rightharpoonup v$
weakly and $\int_\Omega j(u_n,|D u_n|)$ convergent to $\int_\Omega j(v,|D v|)$ imply $\|Du_n\|_{L^p(\Omega)}\to \|Dv\|_{L^p(\Omega)}$,
as $n\to\infty$. This is known to be the case for
strictly convex and coercive integrands $j$ which
are merely dependent on the gradient, say $j(s,t)=j_0(t)$, see~\cite{visi}.
In this paper we shall solve the problem indirectly, for minima, by reducing to identity cases for
the $p$-Laplacian operator.
Of course one could derive the radial symmetry information focusing on identity cases of the
nonlinear term, namely from $\int_\Omega F(|x|,u)=\int_\Omega F(|x|,u^*)$. For results
in this direction, under strict monotonicity assumptions of $f$ such as
$$
\text{$f(|x|,s)>f(|y|,s)$, \, for all $s\in\R^+$ and $x,y\in \Omega$ with $|x|<|y|$},
$$
we refer the reader to~\cite[Section 6]{hichem-rse} (see also~\cite{brock0}).

On the basis of the above discussion, the aim of the paper is to
focus the attention of the quasi-linear term in the functional ${\mathcal E}$
(we believe this is somehow more natural since the
strict convexity of $j(s,\cdot)$ is a very common requirement) and show that, for minima,
identity~\eqref{idgeneral} implies, as desired, that $u$ corresponds to a translate of $u^*$.

\begin{remark}\rm
In light of conditions~\eqref{gg1} and~\eqref{gg1-G}, we also have
\begin{align}
\label{gg2}
|F(|x|,s)| &\leq a(|x|)|s|+C|s|^{p^*},\qquad\text{for every $s\in\R$ and $x\in \Omega$,} \\
\label{gg2-G}
|G(s)| &\leq C|s|^p+C|s|^{p^*},\qquad\text{for every $s\in\R$}.
\end{align}
As a possible variant of the growth condition~\eqref{gg1} one could assume that $f:\R^+\times\R\to\R$
with $f(|x|,s)\geq 0$ for every $s\in\R^+$ and $x\in \Omega$ and
\begin{equation}
\label{gg1-bis}
|f(|x|,s)| \leq C|s|^{p-1}+C|s|^{p^*-1},\qquad\text{for every $s$ in $\R$ and $x\in \Omega$,}
\end{equation}
yielding, in turn,
\begin{equation}
\label{gg2-bis}
|F(|x|,s)| \leq C|s|^{p}+C|s|^{p^*},\qquad\text{for every $s$ in $\R$ and $x\in \Omega$.}
\end{equation}
In this case the symmetrization inequality~\eqref{polya-hardy} for $F$
holds as well (use~\cite[Corollary 5.2]{HSt} in place of~\cite[Corollary 5.5]{HSt} in the case $\Omega=\R^N$
and~\cite[Theorem 6.3]{HSt} in place of~\cite[Theorem 6.4]{HSt} in the case $\Omega=B_R(0)$).
It is often the case the $F$ must satisfy
growth conditions which are more restrictive than~\eqref{gg2} or~\eqref{gg2-bis} in order
to have $m>-\infty$. For instance, assume
that $G(s)=|s|^p$, $j(s,t)=|t|^p$ and $F(|x|,s)=|s|^\sigma$.
Then, as a simple scaling argument shows, to guarantee that the minimization problem is well defined
it is necessary to assume that $p<\sigma<p+p^2/N$.
For $p=2$, the value $2+4/N$ is precisely the well-known threshold for orbital stability
of ground states solutions for the nonlinear Schr\"odinger equation.
\end{remark}

\begin{remark}\rm
If $\Omega=\R^N$ and $G(s_0)>0$ at some point $s_0>0$, one can write down a
function $\psi_{s_0}\in W^{1,p}\cap L^\infty_c(\R^N)$ such that $\int_{\R^N} G(\psi_{s_0})=1$
(see \cite[p.325]{berlio}). Moreover, by~\eqref{j2},
    $$
    \int_{\R^N} j(\psi_{s_0},|D\psi_{s_0}|)\leq \alpha(\|\psi_{s_0}\|_{L^\infty})\int_{\R^N} |D\psi_{s_0}|^p<+\infty.
    $$
    Hence $\psi_{s_0}\in {\mathcal C}$ as well as ${\mathcal E}(\psi_{s_0})<+\infty$ (which guarantees $m<+\infty$).
    If $\Omega=B_R(0)$
    and, for instance, $\pi G(s_0) R^2>1$, similarly, one can find $\phi_{s_0}\in C^\infty_c(B_R(0))$
    belonging to ${\mathcal C}$.
\end{remark}

It would be interesting to extend Theorem~\ref{mainthALL}, in a suitable sense, to allow the case of possibly sign-changing solutions,
systems and multiple constraints. The main ingredients of the argument are the facts that the functional decreases
under both polarization and symmetrization, while the constraint remains invariant to them. This can be
achieved for some classes of vectorial problems
putting cooperativity conditions on the nonlinear term $F$ and considering $G$ and $j$
involving a combinations of functions depending only on one single variable, in order to exploit
Cavalieri's principle and Polya-Szeg\"o type inequalities. Notice also that the almost everywhere
convergence of the gradients due to Dal Maso and Murat~\cite{masmur} is valid for systems of PDEs as well.
We leave this issues to further future investigations.

\section{Preliminary facts}

In the section we include some preparatory results.

\subsection{Polarization and Schwarz symmetrization}

For the notions of this section, we refer, for instance, to~\cite{brock}.
A subset $H$ of $\R^N$ is called a polarizer if it is a closed affine half-space
of $\R^N$. Given $x\in\R^N$ and a polarizer $H$, the reflection of $x$ with respect to the boundary of $H$ is
denoted by $x_H$. The polarization of a function $u:\R^N\to\R^+$ by a polarizer $H$
is the function $u^H:\R^N\to\R^+$ defined by
\begin{equation}
 \label{polarizationdef}
u^H(x):=
\begin{cases}
 \max\{u(x),u(x_H)\}, & \text{if $x\in H$} \\
 \min\{u(x),u(x_H)\}, & \text{if $x\in \R^N\setminus H$.} \\
\end{cases}
\end{equation}
The polarization $\Omega^H\subset\R^N$ of a set $\Omega\subset\R^N$ is
defined as the unique set which satisfies $\chi_{\Omega^H}=(\chi_\Omega)^H$,
where $\chi$ denotes the characteristic function.
The polarization $u^H$ of a nonnegative function $u$ defined on $\Omega\subset \R^N$
is the restriction to $\Omega^H$ of the polarization of the extension $\tilde u:\R^N\to\R^+$ of
$u$ by zero outside $\Omega$.
The Schwarz symmetrization of a set $\Omega\subset \R^N$ is the unique open ball
centered at the origin $\Omega^*$ such that ${\mathcal L}^N(\Omega^*)={\mathcal L}^N(\Omega)$, being ${\mathcal L}^N$ the
$N$-dimensional outer Lebesgue measure. If the measure of $\Omega$ is zero we set $\Omega^*=\emptyset$,
while if the measure of $\Omega$ is not finite we put $\Omega^*=\R^N$.
A measurable function $u$ is admissible for the Schwarz symmetrization if it is nonnegative and, for every $\eps>0$,
the Lebesgue measure of $\{u>\eps\}$ is finite. The Schwarz symmetrization
of an admissible function $u:\Omega\to\R^+$ is the unique function $u^*:\Omega^*\to\R^+$ such that,
for all $t\in\R$, it holds $\{u^*>t\}=\{u>t\}^*$. Considering the extension
$\tilde u:\R^N\to\R^+$ of $u$ by zero outside $\Omega$,
$u^*=(\tilde u)^*|_{\Omega^*}$ and $(\tilde u)^*|_{\R^N\setminus \Omega^*}=0$.

\medskip
\noindent
We shall denote by ${\mathcal H}_*$ the set of all half-spaces
corresponding to $(n-1)$-dimensional Euclidean hyperplanes, containing the origin in the interior.
As known, for a domain $\Omega$, it holds $\Omega^*=\Omega$ if and only
if $\Omega^H=\Omega$, for all $H\in {\mathcal H}_*$ (cf.~\cite[Lemma 6.3]{brock}).
We now recall a very useful convergence result (cf.~e.g.~\cite{jvsh2}).

\begin{proposition}
    \label{convsymm}
    Assume that $\Omega$ is either $\R^N$ or a ball $B_R(0)\subset\R^N$.
There exists a sequence of polarizers $(H_m)\subset {\mathcal H}_*$
such that, for any $1\leq p<\infty$ and all $u\in L^p(\Omega)$,
the sequence $u_m=u^{H_1\cdots H_m}$ converges to $u^*$ strongly in $L^p(\Omega)$,
namely $\|u_m-u^*\|_{L^p(\Omega)}\to 0$ as $m\to\infty$.
\end{proposition}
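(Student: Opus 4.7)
The plan is to combine three ingredients: standard $L^p$-contraction properties of polarization, a ``single function'' approximation lemma, and a diagonal extraction over a countable dense subset of $L^p(\Omega)$.

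First, I would recall the basic $L^p$-properties of the polarization map $u\mapsto u^H$: for every polarizer $H$ one has $\|u^H\|_{L^p(\Omega)}=\|u\|_{L^p(\Omega)}$ together with the nonexpansiveness $\|u^H-v^H\|_{L^p(\Omega)}\le \|u-v\|_{L^p(\Omega)}$; moreover, for every $H\in{\mathcal H}_*$ the Schwarz rearrangement is polarization-invariant, $(u^*)^H=u^*$. Applying the contraction to the pair $(u,u^*)$ then yields the key monotonicity
\[
\|u^H-u^*\|_{L^p(\Omega)}\le \|u-u^*\|_{L^p(\Omega)},\qquad H\in{\mathcal H}_*,
\]
so that the sequence $\|u^{H_1\cdots H_m}-u^*\|_{L^p(\Omega)}$ is non-increasing in $m$ for any choice of polarizers from ${\mathcal H}_*$.

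Next, I would establish the \emph{single function} step: for any fixed nonnegative $u\in L^p(\Omega)$ some sequence $(H^u_m)\subset{\mathcal H}_*$ realizes $u^{H^u_1\cdots H^u_m}\to u^*$ in $L^p(\Omega)$. This is the main obstacle and where the real work lies. I would pick a countable dense family $\{K_j\}\subset{\mathcal H}_*$, arrange the $H^u_m$ so that each $K_j$ occurs infinitely often among them, and use the monotonicity together with the equimeasurability of polarization to produce a limit point $\bar u$ of $(u^{H^u_1\cdots H^u_m})$; since $\bar u^{K_j}=\bar u$ for every $j$, density of $\{K_j\}$ and the continuity of $v\mapsto v^H$ give $\bar u^H=\bar u$ for every $H\in{\mathcal H}_*$, which by the characterization of Schwarz-symmetric functions forces $\bar u=u^*$; the monotonicity then upgrades subsequential convergence to convergence of the full sequence.

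The third step turns this into a universal sequence, independent of $u$. Using separability of $L^p(\Omega)$, I fix a countable dense subset $\{v_k\}$ of nonnegative admissible functions and construct $(H_m)$ inductively in blocks: given $H_1,\dots,H_{N_{k-1}}$, apply the single function lemma to the partially polarized $v_k^{H_1\cdots H_{N_{k-1}}}$ to append finitely many $H_{N_{k-1}+1},\dots,H_{N_k}\in{\mathcal H}_*$ so that $\|v_k^{H_1\cdots H_{N_k}}-v_k^*\|_{L^p(\Omega)}<1/k$. By the monotonicity from the first step, the analogous quantities for $v_j$ with $j<k$ stay below $1/j$, hence $v_j^{H_1\cdots H_m}\to v_j^*$ in $L^p(\Omega)$ for every $j$. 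Finally, a standard $\varepsilon/3$ argument propagates convergence from this dense set to every $u\in L^p(\Omega)$: choosing $v_k$ close to $u$ and using nonexpansiveness of both polarization and Schwarz symmetrization, I bound
\[
\bigl\|u^{H_1\cdots H_m}-u^*\bigr\|_{L^p(\Omega)}\le 2\|u-v_k\|_{L^p(\Omega)}+\bigl\|v_k^{H_1\cdots H_m}-v_k^*\bigr\|_{L^p(\Omega)},
\]
and then send $m\to\infty$ followed by $k\to\infty$. The hard part is genuinely the single function convergence; everything after it is a soft functional-analytic extraction.
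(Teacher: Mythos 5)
The paper does not actually prove this proposition: it is quoted directly from Van Schaftingen~\cite{jvsh2}. Your high-level strategy --- a universal polarization sequence obtained from the single-function case by nonexpansiveness plus a density/diagonal extraction --- is essentially the one used there, and your Step~1 (contraction estimate, $(u^*)^H=u^*$ for $H\in\mathcal{H}_*$, hence monotonicity of $\|u^{H_1\cdots H_m}-u^*\|_{L^p}$) and Step~3 (block-wise diagonal construction over a dense family $\{v_k\}$ and the $\varepsilon/3$ propagation via nonexpansiveness of both polarization and symmetrization) are correct and complete.

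The genuine gap is in the single-function step, which you rightly flag as the crux but whose sketch does not close. Two points are unjustified. First, \emph{``monotonicity together with equimeasurability produce a limit point $\bar u$''} in the strong $L^p$ topology does not follow: equimeasurability only fixes the distribution function and hence all $L^q$ norms, which gives boundedness but not strong precompactness (translates of a fixed bump are equimeasurable yet have no strong $L^p$ limit point), and on $\Omega=\R^N$ one must additionally rule out loss of mass at infinity. Second, even granted a strong subsequential limit $u_{m_k}\to\bar u$, the assertion $\bar u^{K_j}=\bar u$ is not a consequence of what you have written: extracting a sub-subsequence so that the \emph{next} polarizer is $K_j$ only yields $\|\bar u^{K_j}-u^*\|_{L^p}=\|\bar u-u^*\|_{L^p}$, and passing from this equality of distances to the identity $\bar u^{K_j}=\bar u$ requires an equality-case analysis of the polarization contraction (equivalently of the Hardy--Littlewood polarization inequality $\langle v^H,w^H\rangle\ge\langle v,w\rangle$ tested against $w=u^*$), which is delicate because $u^*$ need not be strictly radially decreasing. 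Making these two points rigorous is precisely the content of the Brock--Solynin / Van Schaftingen convergence theorem; once one invokes that single-function result, your Steps~1 and~3 do finish the universal statement cleanly.
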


\noindent
We collect some properties of polarizations.
See~\cite[Lemma 2.5]{HSq} and~\cite[Proposition 2.3]{jvsh1} respectively.
Concerning~\cite[Lemma 2.5]{HSq}, it is stated therein on $\R^N$, but it holds on $B_R(0)$ as well after
extending the functions by zero outside it and recalling that $j(\cdot,0)=0$.

\begin{proposition}
    \label{concretelem}
    Let $\Omega$ be either $\R^N$ or a ball $B_R(0)$,
    $u\in W^{1,p}_0(\Omega,\R^+)$ and $H\in {\mathcal H}_*$. Then $u^H\in W^{1,p}_0(\Omega,\R^+)$ and,
    if $j(u,|Du|)\in\ L^1(\Omega)$, then $j(u^H,|Du^H|)\in\ L^1(\Omega)$ and
 \begin{equation}
    \label{fundineq}
        \int_{\Omega}j(u,|Du|)=\int_{\Omega}j(u^H,|D u^H|).
    \end{equation}
    In particular,
    \begin{equation}
        \label{fundineqpart}
            \int_{\Omega}|D u|^p =\int_{\Omega}|D u^H|^p.
        \end{equation}
        Furthermore, $F(|x|,u),F(|x|,u^H),G(u),G(u^H)\in L^1(\Omega)$ and
        $$
        \int_{\Omega} F(|x|,u^H)\geq \int_\Omega F(|x|,u),\quad
        \int_{\Omega} G(u^H)=\int_\Omega G(u),
        $$
provided that conditions~\eqref{gg1},~\eqref{gg5} and~\eqref{gg1-G} hold.
\end{proposition}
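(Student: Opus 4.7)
The plan is to reduce all four assertions to a pointwise pairing identity on the pairs $\{x,x_H\}$. Since $H\in\mathcal{H}_*$ contains the origin in its interior, the reflection $\sigma(x):=x_H$ is an isometry preserving $\Omega$, with $|x|\leq|x_H|$ for $x\in H$. I would first set $\tilde u(x):=u(x_H)$; by the chain rule $\tilde u\in W^{1,p}(\Omega)$ with $|D\tilde u(x)|=|Du(x_H)|$ a.e. Then by definition $u^H=\max\{u,\tilde u\}$ on $H$ and $u^H=\min\{u,\tilde u\}$ on $H^c$, so standard lattice properties of Sobolev functions yield $u^H\in W^{1,p}_0(\Omega,\R^+)$.

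For the quasi-linear identity, I would perform a case split on $H$: when $u(x)\geq u(x_H)$ the pair $\bigl(u^H(x),|Du^H(x)|\bigr),\bigl(u^H(x_H),|Du^H(x_H)|\bigr)$ equals $\bigl(u(x),|Du(x)|\bigr),\bigl(u(x_H),|Du(x_H)|\bigr)$, while when $u(x)<u(x_H)$ it is the swap of the latter. Indeed, on the swapping set the weak gradient formula for $\max$ and $\min$ gives $|Du^H(x)|=|D\tilde u(x)|=|Du(x_H)|$, and symmetrically at $x_H$ on $H^c$. Hence
\[
j(u^H(x),|Du^H(x)|)+j(u^H(x_H),|Du^H(x_H)|)=j(u(x),|Du(x)|)+j(u(x_H),|Du(x_H)|)
\]
almost everywhere on $H$. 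Integrating over $\Omega\cap H$ and using that $\sigma$ is measure-preserving yields \eqref{fundineq}; the choice $j(s,t)=t^p$ gives \eqref{fundineqpart}. Integrability of $j(u^H,|Du^H|)$ is a byproduct.

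For the $F$ and $G$ part, the growth bounds \eqref{gg2} and \eqref{gg2-G}, combined with the Sobolev embedding $W^{1,p}_0(\Omega)\hookrightarrow L^{p^*}(\Omega)$ and H\"older applied to $a$ (whose exponent is precisely the conjugate of $p^*$), yield $F(|\cdot|,u),G(u)\in L^1(\Omega)$; the same bounds pass to $u^H$, which is equimeasurable with $u$. Cavalieri's principle then gives $\int_\Omega G(u^H)=\int_\Omega G(u)$. For $F$ I would invoke the pointwise inequality
\[
F(|x|,u^H(x))+F(|x_H|,u^H(x_H))\geq F(|x|,u(x))+F(|x_H|,u(x_H)),\qquad x\in H,
\]
which is trivial when $u(x)\geq u(x_H)$; in the opposite case the difference reduces to $\int_{u(x)}^{u(x_H)}[f(|x|,s)-f(|x_H|,s)]\,ds\geq 0$ by \eqref{gg5}, since $|x|\leq|x_H|$. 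Integrating on $\Omega\cap H$ gives the $F$-inequality.

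The only technically delicate point is the gradient computation: one must handle the coincidence set $\{u=\tilde u\}$ (where the $\max$ and $\min$ formulas agree and the pairing identity is trivial) and verify that the weak derivative of $u^H$ actually matches the pointwise prescription used in the swapping case, which is where the reflection chain-rule formula $|D\tilde u(x)|=|Du(x_H)|$ is essential. The rest is bookkeeping on pairs $\{x,x_H\}$ and routine application of Sobolev embeddings.
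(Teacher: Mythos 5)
Your pairing argument is the standard route and matches, in substance, the references \cite{HSq} and \cite{jvsh1} that the paper cites for this proposition (the paper itself gives no independent proof). But there is a genuine gap in the case $\Omega=B_R(0)$. You claim that $\sigma(x):=x_H$ is ``an isometry preserving $\Omega$'' and later ``integrate over $\Omega\cap H$ and use that $\sigma$ is measure-preserving.'' This is false for a ball: since $H\in{\mathcal H}_*$ has the origin in its \emph{interior}, the hyperplane $\partial H$ does not pass through $0$, so $\sigma$ does not fix the origin and $\sigma(B_R(0))\neq B_R(0)$. Two consequences in your write-up: first, $\tilde u(x):=u(x_H)$ is not defined on all of $\Omega$, because $x_H$ can leave $\Omega$; second, your pairing of $\Omega\cap H$ with $\Omega\cap H^c$ under $\sigma$ does not close up, because $|x_H|\geq|x|$ on $H$ (with strict inequality off $\partial H$), so $\sigma(\Omega\cap H^c)\subsetneq\Omega\cap H$.

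The correct repair is exactly the remark the paper makes just before the statement: extend $u$ by zero to $\R^N$, carry out the pairing there (where $\sigma$ \emph{is} a measure-preserving isometry of all of $\R^N$), and note that the contributions from $\R^N\setminus\Omega$ vanish because \eqref{j2} forces $j(s,0)=0$, while the standing hypotheses give $F(|x|,0)=0$ and $G(0)=0$. One must also check that the polarization of the zero-extension still vanishes outside $\Omega$; this follows from $\Omega^H=\Omega$, which holds precisely because $0\in\mathrm{int}(H)$ gives $|x|\leq|x_H|$ on $H$. With that modification, the remainder of your argument is sound: the gradient pairing via the lattice (max/min) formulas plus the chain rule (valid since $\sigma$ is an isometry of $\R^N$), the pointwise $F$-inequality reducing to $\int_{u(x)}^{u(x_H)}[f(|x|,s)-f(|x_H|,s)]\,ds\geq 0$ via \eqref{gg5} and $|x|\leq|x_H|$ on $H$, Cavalieri's principle for $G$, and the integrability from Sobolev embedding together with the observation that $Np/(N(p-1)+p)$ is the H\"older conjugate of $p^*$.
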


\noindent
The next proposition follows by~\cite[Corollary 5.5]{HSt} for the case $\Omega=\R^N$ and~\cite[Theorem 6.4]{HSt} for
the case $\Omega=B_R(0)$ concerning the symmetrization inequality for $F$. Concerning the Cavalieri's principle for $G$,
it follows from~\cite[Theorem 4.4]{HSt} in the case $\Omega=\R^N$ and from~\cite[Theorem 6.2]{HSt} in the case $\Omega=B_R(0)$.
Finally, concerning the generalized P\'olya-Szeg\"o inequality, it follows from~\cite[Corollary 3.3]{HSq}.

\begin{proposition}
    \label{concretelem-star}
    Let $\Omega$ be either $\R^N$ or a ball $B_R(0)$ and let
    $u\in W^{1,p}_0(\Omega,\R^+)$. Then $u^*\in W^{1,p}_0(\Omega,\R^+)$ and,
    if $j(u,|Du|)\in\ L^1(\Omega)$, then $j(u^*,|Du^*|)\in\ L^1(\Omega)$ and
 \begin{equation}
    \label{fundineq-star}
        \int_{\Omega}j(u^*,|Du^*|)\leq \int_{\Omega}j(u,|D u|).
    \end{equation}
    In particular,
    $$
    \int_{\Omega} |Du^*|^p\leq \int_{\Omega} |Du|^p.
    $$
    Furthermore, $F(|x|,u),F(|x|,u^*),G(u),G(u^*)\in L^1(\Omega)$ and
        \begin{equation}
            \label{Fsymmm}
        \int_{\Omega} F(|x|,u^*)\geq \int_\Omega F(|x|,u),\quad
        \int_{\Omega} G(u^*)=\int_\Omega G(u),
    \end{equation}
provided that conditions~\eqref{gg1},~\eqref{gg5} and~\eqref{gg1-G} hold.
\end{proposition}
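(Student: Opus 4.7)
The plan is to establish the three groups of assertions in Proposition~\ref{concretelem-star} by three different tools: an approximation-by-polarizations argument for the generalized P\'olya-Szeg\"o part, a Hardy-Littlewood/layer-cake argument for the symmetrization inequality involving $F$, and Cavalieri's principle for the equality involving $G$. The reason for splitting the proof this way is that the polarization identities of Proposition~\ref{concretelem} are \emph{exact} for $\int j(u,|Du|)$ but only one-sided for $\int F(|x|,u)$, so it is simpler to treat the nonlinear terms directly.

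For the P\'olya-Szeg\"o part, I would first invoke Proposition~\ref{convsymm} to select a sequence $(H_m)\subset{\mathcal H}_*$ for which $u_m:=u^{H_1\cdots H_m}\to u^*$ strongly in $L^p(\Omega)$. Iterating~\eqref{fundineq} and~\eqref{fundineqpart} gives $u_m\in W^{1,p}_0(\Omega,\R^+)$ with
$$
\int_\Omega j(u_m,|Du_m|)=\int_\Omega j(u,|Du|),\qquad \|Du_m\|_{L^p(\Omega)}=\|Du\|_{L^p(\Omega)},
$$
so $(u_m)$ is bounded in $W^{1,p}_0(\Omega)$; up to a subsequence, $u_m\rightharpoonup u^*$ weakly in $W^{1,p}_0(\Omega)$, which in particular yields $u^*\in W^{1,p}_0(\Omega,\R^+)$. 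Weak lower semicontinuity of $w\mapsto\int_\Omega |Dw|^p$ gives $\int_\Omega |Du^*|^p\le\int_\Omega |Du|^p$, and an Ioffe-De~Giorgi-Serrin type lower semicontinuity theorem for convex integrands of the calculus of variations — applicable here thanks to~\eqref{j1}--\eqref{j2}, the almost everywhere convergence $u_m\to u^*$ after extraction, and the weak $L^p$ convergence $Du_m\rightharpoonup Du^*$ — yields
$$
\int_\Omega j(u^*,|Du^*|)\le\liminf_{m\to\infty}\int_\Omega j(u_m,|Du_m|)=\int_\Omega j(u,|Du|),
$$
establishing~\eqref{fundineq-star} and in particular $j(u^*,|Du^*|)\in L^1(\Omega)$.

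For the nonlinear terms, the plan is to argue directly via layer cake rather than passing to the limit in iterated polarizations. Since $u,u^*\in W^{1,p}_0(\Omega)\hookrightarrow L^p(\Omega)\cap L^{p^*}(\Omega)$, the growth conditions~\eqref{gg2} (combined with H\"older against $a\in L^{Np/(N(p-1)+p)}$) and~\eqref{gg2-G} give $F(|x|,u), F(|x|,u^*), G(u), G(u^*)\in L^1(\Omega)$. Writing
$$
\int_\Omega F(|x|,u)\,dx=\int_0^{+\infty}\int_{\{u>s\}} f(|x|,s)\,dx\,ds
$$
and using~\eqref{gg5} (so that $f(|\cdot|,s)$ is symmetric decreasing) together with the classical Hardy-Littlewood inequality $\int_{\{u>s\}} f(|x|,s)\,dx \le \int_{\{u>s\}^*} f(|x|,s)\,dx = \int_{\{u^*>s\}} f(|x|,s)\,dx$, integration over $s$ yields $\int_\Omega F(|x|,u)\le\int_\Omega F(|x|,u^*)$. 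The identity $\int_\Omega G(u)=\int_\Omega G(u^*)$ is then just Cavalieri's principle: the equimeasurability $|\{u>s\}|=|\{u^*>s\}|$ applied to $\int_\Omega G(u)\,dx=\int_0^{+\infty} g(s)|\{u>s\}|\,ds$.

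The main obstacle is the lower semicontinuity of $w\mapsto\int_\Omega j(w,|Dw|)$ along the weakly convergent sequence $u_m\rightharpoonup u^*$. Because $j$ depends nontrivially on its first argument as well, a pure convexity-in-$\xi$ argument combined with weak convergence of $Du_m$ is \emph{not} enough; one must exploit the strong, hence (after extraction) almost everywhere, convergence $u_m\to u^*$ to invoke a semicontinuity theorem of Ioffe-De~Giorgi-Serrin type, and it is precisely here that the continuity of $j$ and the sign condition $j\ge 0$ enter essentially. A secondary, minor subtlety when $\Omega=\R^N$ is the lack of compact Sobolev embedding; this is entirely sidestepped, first by the strong $L^p$ convergence supplied by Proposition~\ref{convsymm}, and then by the purely measure-theoretic layer-cake treatment of $F$ and $G$.
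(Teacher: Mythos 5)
Your proposal is correct. Note first that the paper does not actually give a proof of this proposition: it proves it by citation, invoking \cite[Corollary 3.3]{HSq} for the generalized P\'olya--Szeg\"o inequality~\eqref{fundineq-star}, and \cite[Corollaries/Theorems 5.5, 6.4, 4.4, 6.2]{HSt} for the Hardy--Littlewood inequality on $F$ and the Cavalieri identity on $G$. Your argument is a self-contained unpacking of those references and follows the same underlying strategy, so it is ``the same approach'' in spirit, but it fills in details the paper delegates.

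A few remarks on the points that deserve care, all of which you handle at least implicitly. For the P\'olya--Szeg\"o part, the reduction to Proposition~\ref{convsymm} plus the exact polarization identity~\eqref{fundineq} is exactly the mechanism underlying \cite{HSq}; the nontrivial ingredient is the sequential weak lower semicontinuity of $w\mapsto\int_\Omega j(w,|Dw|)$, and you correctly identify Ioffe's theorem as the right tool, with the convexity of $\xi\mapsto j(s,|\xi|)$ coming from~\eqref{j1} (convexity \emph{and} monotonicity of $t\mapsto j(s,t)$ are both needed for the composition with $|\cdot|$), and nonnegativity from~\eqref{j2}. One small point worth making explicit when $\Omega=\R^N$: Ioffe-type theorems are usually stated on bounded domains, so one applies the theorem on balls $B_\rho$ and lets $\rho\to\infty$ by monotone convergence, using $j\geq 0$. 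For the $F$ and $G$ parts, your layer-cake approach is the right one, and it has the additional advantage of not requiring strong $L^{p^*}$ convergence of $u_m$ (though that is in fact available here since all $u_m$ are equimeasurable with $u^*$, so $\|u_m\|_{L^{p^*}}=\|u^*\|_{L^{p^*}}$ and uniform convexity upgrades the weak convergence). The only subtlety you leave implicit is the justification of Fubini when $f$ is signed: this is covered by the dominated estimate $\int_\Omega\int_0^{u(x)}|f(|x|,s)|\,ds\,dx\leq\int_\Omega(a(|x|)u+Cu^{p^*})<\infty$, using that $a\in L^{Np/(N(p-1)+p)}=L^{(p^*)'}$. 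Also, the ``Hardy--Littlewood'' step for a \emph{signed} radially decreasing $f(|\cdot|,s)$ still holds, by comparing $E\setminus E^*$ and $E^*\setminus E$ against the value of $f$ at the boundary sphere of $E^*$, but it is worth stating since the textbook version of the inequality assumes nonnegativity.

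There is no gap; the argument is complete modulo these standard technical verifications.
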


\noindent
The next result comes from~\cite{FV} for $\Omega$ bounded
and~\cite{FV1} for $\Omega=\R^N$.

\begin{proposition}
    \label{identityc}
Assume that $\Omega$ is an open, bounded subset of $\R^N$ and let
$u\in W^{1,p}_0(\Omega)$ be a nonnegative function, $1<p<\infty$, such that
$$
{\mathcal L}^N(C^*)=0,\quad
C^*:=\{x\in\Omega^*:|D u^*(x)|=0\}\cap (u^*)^{-1}(0,{\rm esssup}\, u).
$$
Then, if
$$
\|D u^*\|_{L^p(\Omega^*)}=\|D u\|_{L^p(\Omega)},
$$
the domain $\Omega$ is equivalent to a ball and $u=u^*$ a.e.~in $\Omega$, up to a translation.
Moreover, the same conclusion holds for $\Omega=\R^N$.
\end{proposition}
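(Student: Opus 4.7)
The plan is to follow the Brothers--Ziemer strategy, as refined by Ferone and Volpicelli, which rests on three ingredients: the coarea formula, the rigidity of the Euclidean isoperimetric inequality, and a concentricity argument for the nested superlevel sets.

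First, setting $M={\rm esssup}\, u$ and $\mu(t)={\mathcal L}^N(\{u>t\})$ (the distribution function, which by Cavalieri's principle agrees with that of $u^*$), a standard truncation-plus-coarea argument together with H\"older's inequality on each level $\{u=t\}$ yields
\begin{equation*}
\int_\Omega |Du|^p\,dx \;\geq\; \int_0^M \frac{P(\{u>t\})^p}{(-\mu'(t))^{p-1}}\,dt,
\end{equation*}
where $P$ denotes the De Giorgi perimeter. For $u^*$ the same identity holds with equality, precisely because the hypothesis ${\mathcal L}^N(C^*)=0$ forbids plateaus of $u^*$ at intermediate heights, so $\mu$ has no singular component and the H\"older inequality on the spherical level sets of $u^*$ is saturated. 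Combining this with the isoperimetric inequality $P(\{u>t\})\geq P(\{u^*>t\})$ and the equal-energy assumption $\|Du\|_{L^p(\Omega)}=\|Du^*\|_{L^p(\Omega^*)}$ forces every inequality to collapse into an equality. By the rigidity of the isoperimetric inequality in $\R^N$, each superlevel set $\{u>t\}$ therefore coincides, up to a Lebesgue null set, with a ball $B_{r(t)}(x(t))$ whose radius matches that of $\{u^*>t\}$.

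The main obstacle is the concentricity step, namely showing that $x(t)$ is essentially independent of $t$. The hypothesis ${\mathcal L}^N(C^*)=0$ enters decisively again here: it guarantees that $r(t)$ is \emph{strictly} decreasing on $(0,M)$, and the nesting $\{u>t_2\}\subset\{u>t_1\}$ for $t_1<t_2$ then gives the Lipschitz-type estimate $|x(t_1)-x(t_2)|\leq r(t_1)-r(t_2)$. In particular $x(\cdot)$ is of bounded variation and converges to some limit $x_0$ as $t\uparrow M$. The delicate point, carried out in \cite{FV,FV1} by a first-variation computation on the trajectory $t\mapsto x(t)$ combined with a refined analysis of $\mu'$, is that any nontrivial drift of $x(t)$ would strictly increase the right-hand side of the displayed inequality above $\int_{\Omega^*}|Du^*|^p$, contradicting the equality hypothesis. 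Consequently $x(t)\equiv x_0$ for a.e. $t\in(0,M)$.

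Once $x(t)\equiv x_0$ is established, the layer-cake representation $u(x)=\int_0^\infty \chi_{\{u>t\}}(x)\,dt$ together with the identification of all superlevel sets as concentric balls with the same radii as those of $u^*$ forces $u(\cdot)=u^*(\cdot-x_0)$ a.e. In the bounded case, this identification applied to the outermost level $t\to 0^+$ yields that the essential support of $u$ is a translate of $\Omega^*$, so $\Omega$ itself is equivalent to a ball. For $\Omega=\R^N$ the argument is unchanged modulo one truncation: admissibility of $u$ ensures $\{u>\eps\}$ has finite measure for every $\eps>0$, one applies the previous analysis to each $(u-\eps)_+$ and passes to the limit $\eps\to 0^+$ to obtain the same translation identity globally.
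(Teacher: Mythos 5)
The paper itself does not prove Proposition~\ref{identityc}: it is imported as a black box from Ferone and Volpicelli, \cite{FV} for bounded $\Omega$ and \cite{FV1} for $\R^N$, which in turn refine Brothers--Ziemer \cite{bzim}. Your outline faithfully reproduces the architecture of those proofs (coarea/H\"older chain, saturation of the chain for $u^*$ under ${\mathcal L}^N(C^*)=0$, isoperimetric rigidity making a.e.\ superlevel set a ball $B_{r(t)}(x(t))$, the nesting bound $|x(t_1)-x(t_2)|\le r(t_1)-r(t_2)$, layer-cake reassembly), so you are on the same route the paper relies on via its citation.

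As a proof, however, there is a genuine gap at exactly the two places where all the work of \cite{bzim,FV,FV1} is located. First, the concentricity step: the assertion that any nontrivial drift of $x(t)$ would strictly increase the gradient integral is precisely the nontrivial claim, and you give no mechanism for it, deferring instead to \cite{FV,FV1} --- but the proposition under proof \emph{is} the theorem of those papers, so as a self-contained argument this is circular; the Lipschitz/BV bound on $x(\cdot)$ alone does not force constancy. Second, the opening chain buries genuinely delicate measure theory: the claimed equality for $u^*$ requires the absolute continuity of $\mu$ and the identity $-\mu'(t)=\int_{\{u^*=t\}}|Du^*|^{-1}\,d{\mathcal H}^{N-1}$ for a.e.\ $t$ (this is where ${\mathcal L}^N(C^*)=0$ really enters), while on the $u$ side one must relate $P(\{u>t\})$, ${\mathcal H}^{N-1}(\{u=t\})$ and $-\mu'(t)$ and control the part of $\{u=t\}$ where $Du=0$; calling this ``standard'' is where Brothers--Ziemer spend most of their effort. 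Two smaller points: letting $t\to0^+$ identifies $\{u>0\}$, not $\Omega$, with a ball --- since the level $0$ is excluded from $C^*$, the clause ``$\Omega$ is equivalent to a ball'' needs an extra argument (or hypothesis) ruling out ${\mathcal L}^N(\Omega\cap\{u=0\})>0$, so your ``essential support of $u$ is a translate of $\Omega^*$'' is unjustified as written; and for $\Omega=\R^N$ the set $\{u>\eps\}$ has finite measure but need not be bounded, so the reduction to the bounded statement does not literally apply (this is why \cite{FV1} treats $\R^N$ separately), and one must also check that the translations obtained at different truncation levels coincide.
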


\subsection{Solutions to the Euler-Lagrange equation}
\label{nonsmoothsect}

For any $u\in W^{1,p}_0(\Omega)$, define
\begin{equation}\label{defvu}
V_u=\big\{v\in W^{1,p}_0(\Omega)\cap L^\infty(\Omega):\,
u\in L^{\infty}(\{x\in\Omega:\,v(x)\not=0\})\big\}.
\end{equation}
The vector space $V_u$ was firstly introduced by {\sc Degiovanni} and {\sc Zani} in~\cite{degzan} in the case $p=2$.
In~\cite{degzan} it is also proved that $V_u$ with $p=2$ is dense in $W^{1,2}_0(\Omega)$. This fact
extends with the same proof to the general case of any $p\neq 2$.
Let $J:W^{1,p}_0(\Omega)\to\R\cup\{+\infty\}$ be the functional
$$
J(u)=\int_\Omega j(u,|Du|).
$$
The following fact can be easily checked. It shows that $V_u$ is a good test space
to differentiate non-smooth functionals of calculus of variations satisfying
suitable growth conditions.

\begin{proposition}
    \label{derivate}
Assume conditions~\eqref{j2}, \eqref{j3} and \eqref{j4}.
Then, for every $u\in W^{1,p}_0(\Omega)$ with $J(u)<+\infty$
and every $v\in V_u$ we have
$$
j_s(u,|Du|)v\in L^1(\Omega),\qquad
j_t(u,|Du|)\frac{Du}{|Du|}\cdot D v\in L^1(\Omega),
$$
with the agreement that $j_t(u,|Du|)\frac{Du}{|Du|}=0$ when $|Du|=0$ (in view of~\eqref{j4}). Moreover,
the function $\{t\mapsto J(u+tv)\}$ is of class $C^1$ and
$$
J'(u)(v)=\int_{\Omega}j_t(u,|Du|)\frac{Du}{|Du|}\cdot Dv
+\int_{\Omega}j_s(u,|Du|)v.
$$
\end{proposition}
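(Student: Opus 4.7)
The plan is to verify integrability of both candidate integrands by exploiting the $L^\infty$ control of $u$ on $\{v\neq 0\}$ built into the definition of $V_u$, and then to differentiate $t\mapsto J(u+tv)$ under the integral sign via Lebesgue dominated convergence. To begin, put $M := \operatorname{esssup}_{\{v\neq 0\}} |u|$ and $K := \|v\|_{L^\infty(\Omega)}$; both are finite by the very definition of $V_u$. Since $v$ vanishes off $\{v\neq 0\}$, \eqref{j3} together with the monotonicity of $\beta$ yields
$$
\int_\Omega |j_s(u,|Du|)\,v|\, dx \leq \beta(M)\,K \int_{\{v\neq 0\}} |Du|^p\, dx < +\infty.
$$
For the second integrand I would invoke the classical fact that $Dv=0$ a.e.\ on $\{v=0\}$, which confines the integral to $\{v\neq 0\}$; coupling \eqref{j4} with H\"older then gives
$$
\int_\Omega \Big| j_t(u,|Du|)\tfrac{Du}{|Du|}\cdot Dv \Big|\, dx \leq \gamma(M)\, \|Du\|_{L^p(\Omega)}^{p-1}\, \|Dv\|_{L^p(\Omega)},
$$
the convention $j_t(s,0)\,\xi/|\xi|\equiv 0$ being consistent on $\{|Du|=0\}$ because \eqref{j4} forces $j_t(s,0)=0$.

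Next, to obtain the $C^1$ statement together with the formula for $J'(u)(v)$, I would fix $T>0$ and restrict to $t\in[-T,T]$. On $\{v=0\}$ both $u+tv$ and $D(u+tv)$ reduce to $u$ and $Du$ a.e., whereas on $\{v\neq 0\}$ the estimates $|u+tv|\leq M+TK$ and $|D(u+tv)|^p \leq 2^{p-1}(|Du|^p+T^p|Dv|^p)$ combined with \eqref{j2} produce a $t$-uniform $L^1$ majorant for $j(u+tv,|D(u+tv)|)$. Writing
$$
\frac{j(u+tv,|D(u+tv)|)-j(u,|Du|)}{t} = \int_0^1 \Big[ j_s\, v + j_t\, \tfrac{D(u+stv)\cdot Dv}{|D(u+stv)|}\Big]\, ds
$$
with $j_s,j_t$ evaluated at $(u+stv,|D(u+stv)|)$, the same estimates (with $M+TK$ in place of $M$) produce an $L^1(\Omega)$ majorant of the integrand independent of $(s,t)\in[0,1]\times[-T,T]$. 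Dominated convergence then yields differentiability at $t=0$ together with the desired formula; applied a second time with the continuity of $j_s,j_t$, it gives continuity of $t\mapsto J'(u+tv)(v)$, hence the $C^1$ regularity.

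The main technical nuisance is the singular factor $Du/|Du|$ in the $j_t$ term: along the segment $u+stv$ the gradient $D(u+stv)$ can pass through zero at varying $(s,t)$, so the integrand is only defined a.e. This is handled cleanly by the same convention, because the bound $|j_t(s,|\xi|)|\leq \gamma(|s|)|\xi|^{p-1}$ from \eqref{j4} shows that $(s,\xi)\mapsto j_t(s,|\xi|)\,\xi/|\xi|$ extends continuously to $\xi=0$, so both the pointwise chain rule in the display above and the dominating function argument survive at the degenerate set without further care.
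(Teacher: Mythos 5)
The paper states this proposition without proof---it is introduced with ``The following fact can be easily checked''---so there is no authored argument to compare against; your direct dominated-convergence proof is correct and is clearly the intended ``easy check''. Your treatment of the degenerate set is the only point deserving a second look, and it holds up: since $p>1$, \eqref{j4} forces $j_t(s,0)=0$, which makes $\xi\mapsto j(s,|\xi|)$ globally $C^1$ on $\R^N$ with gradient vanishing at $\xi=0$, so the pointwise chain rule along $s\mapsto u+stv$ and the $(s,t)$-uniform $L^1$ majorant on $\{v\neq 0\}$ both survive, while on $\{v=0\}$ the a.e.\ vanishing of $Dv$ and $v$ kills both integrands.
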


\noindent
We recall Definitions 4.3 and 5.5 from~\cite{campad}, respectively, adapted to our concrete framework.

\begin{definition}
Let $u\in W^{1,p}_0(\Omega)$ with $J(u)<+\infty$. For every $v\in W^{1,p}_0(\Omega)$
and $\eps>0$ we define $J_\eps^0(u;v)$ to be the infimum of the $r$'s in $\R$ such that
there exist $\delta>0$ and a continuous function
$$
{\mathcal V}: B_\delta (u,J(u))\cap {\rm epi}(J)\times ]0,\delta]\to B_\eps(v),
$$
which satisfies
\begin{equation*}
J(\xi+t{\mathcal V}((\xi,\mu),t))\leq \mu+rt,
\end{equation*}
whenever $(\xi,\mu)\in B_\delta(u,J(u))\cap {\rm epi}(J)$
and $t\in ]0,\delta]$. Finally, we set
$$
J^0(u;v):=\sup_{\eps>0}J_\eps^0(u;v).
$$
\end{definition}

\begin{definition}
    \label{defbar}
Let $u\in W^{1,p}_0(\Omega)$ with $J(u)<+\infty$. For every $v\in W^{1,p}_0(\Omega)$
and $\eps>0$ we define $\bar J_\eps^0(u;v)$ to be the infimum of the $r$'s in $\R$ such that
there exist $\delta>0$ and a continuous function
$$
{\mathcal H}: B_\delta (u,J(u))\cap {\rm epi}(J)\times [0,\delta]\to W^{1,p}_0(\Omega),
$$
which satisfies ${\mathcal H}((\xi,\mu),0)=\xi$,
\begin{equation}
\label{norma}
\Big\|\frac{{\mathcal H}((\xi,\mu),t_1)-{\mathcal H}((\xi,\mu),t_2)}{t_1-t_2}-v\Big\|_{W^{1,p}(\Omega)}<\eps,
\end{equation}
and $J({\mathcal H}((\xi,\mu),t))\leq \mu+rt$, whenever $(\xi,\mu)\in B_\delta(u,J(u))\cap {\rm epi}(J)$
and $t,t_1,t_2\in [0,\delta]$ with $t_1\neq t_2$. Finally, we set
$$
\bar J^0(u;v):=\sup_{\eps>0}\bar J_\eps^0(u;v).
$$
\end{definition}

\noindent
As remarked in~\cite[cf.~p.1037]{campad} it always holds $J^0(u;v)\leq \bar J^0(u;v)$.
Recalling that $\partial J(u)$ is the subdifferential introduced in \cite[Definition 4.1]{campad}, we have the following

\begin{lemma}
    \label{subdcaract}
    Assume conditions~\eqref{j2}, \eqref{j3} and \eqref{j4}.
    Let $u\in W^{1,p}_0(\Omega)$ with $J(u)<+\infty$. Then, the following facts hold:
    \begin{itemize}
        \item[{\rm (i)}] for every $v\in V_u$, we have
        $$
        J^0(u;v)\leq \bar J^0(u;v)\leq \int_\Omega j_t(u,|Du|)\frac{Du}{|Du|}\cdot Dv +
        \int_\Omega j_s(u,|Du|)v.
        $$
        \item[{\rm (ii)}] if $\partial J(u)\neq\emptyset$, then $\partial J(u)=\{\alpha\}$ with $\alpha\in W^{-1,p'}(\Omega)$ and
        \begin{equation*}
            \int_\Omega j_t(u,|Du|)\frac{Du}{|Du|}\cdot D v +
            \int_\Omega j_s(u,|Du|)v=\langle \alpha,v\rangle,
        \end{equation*}
        for all $v\in V_u$.
    \end{itemize}
\end{lemma}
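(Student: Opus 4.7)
My plan splits the statement into its two parts; (ii) will follow formally from (i) together with the density of $V_u$ in $W^{1,p}_0(\Omega)$.

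For part (i), the first inequality $J^0(u;v) \leq \bar J^0(u;v)$ is already recorded above the statement, so the whole task is to bound $\bar J^0(u;v)$ from above. The natural candidate for Definition~\ref{defbar} is the straight line homotopy
\[
{\mathcal H}((\xi,\mu),t) := \xi + tv,
\]
which is jointly continuous, satisfies ${\mathcal H}((\xi,\mu),0)=\xi$, and whose difference quotient in $t$ equals $v$ identically; consequently condition~\eqref{norma} holds with equality for every $\eps>0$. It then remains to certify that, for each $r > \int_\Omega j_t(u,|Du|)\frac{Du}{|Du|}\cdot Dv + \int_\Omega j_s(u,|Du|)\,v$, one has $J(\xi+tv) \leq \mu + rt$ on a sufficiently small epi$(J)$-neighbourhood of $(u,J(u))$ and for small $t>0$. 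Since $\mu \geq J(\xi)$, it suffices to estimate $J(\xi+tv)-J(\xi)$.

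For this estimate I would rely on the fundamental-theorem-of-calculus representation
\[
J(\xi+tv) - J(\xi) = \int_0^t \Big\{\int_\Omega j_t(\xi+sv,|D(\xi+sv)|)\frac{D(\xi+sv)}{|D(\xi+sv)|}\cdot Dv + \int_\Omega j_s(\xi+sv,|D(\xi+sv)|)\,v\Big\}\,ds
\]
supplied by Proposition~\ref{derivate}, combined with the key observation that $Dv = 0$ almost everywhere on $\{v=0\}$, so every spatial integrand is supported on $\{v\neq 0\}$, a set where both $v$ and $u$ are essentially bounded by the very definition of $V_u$. Coupled with the growth conditions~\eqref{j3}--\eqref{j4} and the fact that, along sequences in epi$(J)$ converging to $(u,J(u))$, the lower semicontinuity of $J$ and the strict convexity~\eqref{j1} upgrade $W^{1,p}$-convergence to strong $L^p$-convergence of gradients, a dominated convergence argument should force the inner integral to converge to the target expression uniformly in $\xi$ and in $s\in[0,t]$, closing the estimate. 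The main obstacle I expect is precisely that $v\in V_u$ does not automatically imply $v\in V_\xi$ for $\xi$ merely close to $u$ in $W^{1,p}$, so Proposition~\ref{derivate} does not apply verbatim at $\xi$. I would circumvent this either by replacing the straight-line homotopy with a truncated variant $\xi + t\,v\,\eta_k(\xi)$, with $\eta_k$ a Lipschitz cut-off of $\xi$ at level $k$ (so that $v\,\eta_k(\xi)\in V_\xi$), and letting $k\to\infty$ afterwards, or by performing the integral estimate directly on $\{v\neq 0\}$, exploiting $u\in L^\infty(\{v\neq 0\})$ and the $L^p$-smallness of $\xi-u$.

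For part (ii), the Campa--Degiovanni subdifferential is defined so that $\langle \alpha, v\rangle \leq J^0(u;v)$ for every $v \in W^{1,p}_0(\Omega)$. Since $V_u$ is a vector space, applying part (i) to both $v$ and $-v$ produces matching upper and lower bounds and yields
\[
\langle \alpha, v\rangle = \int_\Omega j_t(u,|Du|)\frac{Du}{|Du|}\cdot Dv + \int_\Omega j_s(u,|Du|)\,v \qquad \text{for every } v \in V_u.
\]
The density of $V_u$ in $W^{1,p}_0(\Omega)$ recalled above (the Degiovanni--Zani argument carrying over verbatim for arbitrary $p>1$) shows that $\alpha \in W^{-1,p'}(\Omega)$ is uniquely determined by this identity, so $\partial J(u) = \{\alpha\}$, as claimed.
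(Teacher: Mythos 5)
Your first alternative --- replacing the straight-line homotopy by ${\mathcal H}((\xi,\mu),t) = \xi + t\,v\,\eta_k(\xi)$, with $\eta_k$ a cut-off depending on $\xi$ --- is exactly the device the paper uses (with $\eta_k(\xi)=H(\xi/k_0)$ for a fixed smooth bump $H$). But the phrase ``letting $k\to\infty$ afterwards'' misreads the quantifier structure of Definition~\ref{defbar}. You do not pass to a limit in $k$: you \emph{fix} $k=k_0$ \emph{before} choosing $\delta$, large enough that $\|H(u/k_0)v - v\|_{W^{1,p}} < \eps$ and that the target $r$ still strictly dominates the derivative expression with $v$ replaced by $H(u/k_0)v$ (both achieved by dominated convergence in $k$). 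Only then do you shrink $\delta$ so that, uniformly over $z\in B(u,\delta)\cap J^\eta$ and $\theta\in[0,\delta)$, one has both $\|H(z/k_0)v-v\|_{W^{1,p}}<\eps$ and $J'(z+\theta H(z/k_0)v)\big(H(z/k_0)v\big)<r$. The mean value theorem then gives $J(z+tH(z/k_0)v)\le J(z)+rt\le\mu+rt$, hence $\bar J^0_\eps(u;v)\le r$, and you take the supremum over $\eps$ at the very end; no passage $k\to\infty$ ever occurs.

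Your second alternative --- estimating directly on $\{v\neq 0\}$ using only $u\in L^\infty(\{v\neq 0\})$ and the $W^{1,p}$-smallness of $\xi-u$ --- does not close. The growth bounds~\eqref{j3}--\eqref{j4} involve $\beta(|\xi+sv|)$ and $\gamma(|\xi+sv|)$; since $\beta,\gamma$ are merely continuous and $\xi$ need not be essentially bounded on $\{v\neq 0\}$ just because $\xi$ is norm-close to $u$, there is no uniform majorant and the dominated convergence step fails precisely where you invoke it. That is why the truncation must be taken in the moving variable $\xi$, not in $u$. Also, the remark that lower semicontinuity and strict convexity ``upgrade $W^{1,p}$-convergence to strong $L^p$-convergence of gradients'' is spurious: the ball $B_\delta(u,J(u))$ is taken in the norm topology of $W^{1,p}_0(\Omega)\times\R$, so strong $L^p$-convergence of gradients is automatic and no convexity enters. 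Part (ii) is fine and matches the paper.
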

\begin{proof}
Let $\eta>0$ with $J(u)<\eta$. Moreover, let
$v\in V_u$ and $\eps>0$. Take now $r\in\R$ with
\begin{equation}
    \label{step1}
\int_\Omega j_t(u,|Du|)\frac{Du}{|Du|}\cdot D v +
\int_\Omega j_s(u,|Du|)v<r.
\end{equation}
Let $H$ be a $C^\infty(\R)$ function such that
\begin{equation}
    \label{defh}
H(s)=1\,\,\, \text{on $[-1,1]$},
\quad H(s)=0\,\,\, \text{outside $[-2,2]$},
\quad |H'(s)|\leq 2\,\,\, \text{on $\R$}.
\end{equation}
Then, there exists $k_0\geq 1$ such that
\begin{equation}
    \label{prima2}
\Big\|H(\frac{u}{k_0})v-v\Big\|_{W^{1,p}(\Omega)}<\eps,
\end{equation}
and
\begin{equation}
    \label{step2}
\int_\Omega j_t(u,|Du|)\frac{Du}{|Du|}\cdot D \Big(H(\frac{u}{k_0})v\Big) +
\int_\Omega j_s(u,|Du|)H(\frac{u}{k_0})v<r.
\end{equation}
In fact, setting $v_k=H(u/k)v$, we have $v_k\in V_u$ for every
$k\geq 1$ and $v_k$ converges to $v$ in $W^{1,p}_0(\Omega)$, yielding inequality~\eqref{prima2},
for $k$ large enough. By Proposition~\ref{derivate}, we can consider $J'(u)(v_k)$ for all $k\geq 1$
and, as $k$ goes to infinity, for a.e.\ $x\in\Omega$, we have
\begin{gather*}
j_s(u(x),|D u(x)|)v_k(x)\to
j_s(u(x),|D u(x)|)v(x),\qquad
\\
j_t(u(x),|D u(x)|)\frac{Du(x)}{|Du(x)|}\cdot Dv_k (x)\to
j_t(u(x),|D u(x)|)\frac{Du(x)}{|Du(x)|}\cdot Dv(x),
\end{gather*}
as well as
\begin{gather*}
\big|j_s(u,|Du|)v_k\big|\leq |j_s(u,|D u|)| |v|,
\\
\big|j_t(u,|Du|)\frac{Du}{|Du|}\cdot Dv_k\big|\leq
\left|j_t(u,|Du|)||Dv\right|
+2|v||j_t(u,|Du|)||D u|.
\end{gather*}
Since $v\in V_u$ and by the growth estimates~\eqref{j3}-\eqref{j4}, by dominated convergence we get
\begin{gather*}
\lim\limits_{k\to \infty}\int_\Omega j_s(u,|Du|)v_k\,
=\int_\Omega j_s(u,|D u|)v\, ,
\\
\lim\limits_{k\to \infty}\int_{\Omega}j_t(u,|D
u|)\frac{Du}{|Du|}\cdot Dv_k =\int_{\Omega}j_t(u,|Du|)\frac{Du}{|Du|}\cdot Dv,
\end{gather*}
which, together with~\eqref{step1}, yields~\eqref{step2}.
Let us now prove that there exists $\delta_1>0$ such that
\begin{equation}
    \label{prima3-mod}
\Big\|H(\frac{z}{k_0})v-v\Big\|_{W^{1,p}(\Omega)}<\eps,
\end{equation}
as well as
\begin{align}
    \label{step3-mod}
&   \int_\Omega j_t(z+\theta H(\frac{z}{k_0})v,|Dz+\theta D\big(H(\frac{z}{k_0})v\big)|)\frac{Dz+\theta D\big(H(\frac{z}{k_0})v\big)}{|Dz+\theta D\big(H(\frac{z}{k_0})v\big)|}\cdot D \Big(H(\frac{z}{k_0})v\Big) \\
&   +
\int_\Omega j_s(z+\theta H(\frac{z}{k_0})v,|Dz+\theta D\big(H(\frac{z}{k_0})v\big)|)H(\frac{z}{k_0})v<r,   \notag
\end{align}
for all $z\in B(u,\delta_1)\cap J^\eta$ and $\theta\in [0,\delta_1)$. Indeed, take
$u_n\in J^\eta$ such that $u_n \to u$ strongly in $W^{1,p}_0(\Omega)$, $\theta_n\to 0$ as $n\to\infty$ and consider
$v_n=H(u_n/k_0)v\in V_{u_n}$. It follows that $v_n$ converges to $H(u/k_0)v$
strongly in $W^{1,p}_0(\Omega)$, so that~\eqref{prima3-mod} follows by~\eqref{prima2}. Now,
for a.e.~$x\in\Omega$, we have
\begin{gather*}
j_s(u_n(x)+\theta_n v_n(x),|Du_n(x)+\theta_n Dv_n(x)|)v_n(x) \to j_s(u(x),|Du(x)|)H\big(\frac{u(x)}{k_0}\big)v(x)\\
j_t(u_n(x)+\theta_n v_n(x),|Du_n(x)+\theta_n Dv_n(x)|)\frac{Du_n(x)+\theta_n Dv_n(x)}{|Du_n(x)+\theta_n Dv_n(x)|}\cdot Dv_n(x) \\
\to
j_t(u(x),|Du(x)|)\frac{Du(x)}{|Du(x)|}\cdot D\big(H\big(\frac{u}{k_0}\big)v\big)(x).
\end{gather*}
Moreover, we have
\begin{gather*}
\left|j_s(u_n+\theta_n v_n,|Du_n+\theta_n Dv_n|)v_n\right|  \\
\leq  2^{p-1}\beta(2k_0+\|v\|_{L^\infty(\Omega)})
\|v\|_{L^\infty(\Omega)}(|Du_n|^p+|Dv_n|^p),   \\
\big|j_t(u_n+\theta_n v_n,|Du_n+\theta_n Dv_n|)\frac{Du_n+\theta_n Dv_n}{|Du_n+\theta_n Dv_n|}\cdot Dv_n\big| \\
\leq 2^{p-1}\gamma(2k_0+\|v\|_{L^\infty(\Omega)})(\big |D u_n|^{p-1}|Dv_n|+|Dv_n|^p\big).
\end{gather*}
Then, by dominated convergence we obtain
\begin{gather*}
\lim_{n\to \infty}\int_\Omega j_s(u_n+\theta_n v_n,|Du_n+\theta_n Dv_n|)v_n \\
=\int_\Omega j_s(u,|Du|)H\big(\frac{u}{k_0}\big)v\,,\\
\lim_{n\to \infty}\int_{\Omega}j_t(u_n+\theta_n v_n,|Du_n+\theta_n Dv_n|)\frac{Du_n+\theta_n Dv_n}{|Du_n+\theta_n Dv_n|}\cdot Dv_n \\
=\int_{\Omega}j_t(u,|Du|)\frac{Du}{|Du|}\cdot D\Big( H\Big(
\frac{u}{k_0}\Big)v\Big),
\end{gather*}
which, in light of~\eqref{step2}, proves~\eqref{step3-mod}. Then, taking into account that
$\{t\mapsto J(z+t H(\frac{z}{k_0})v)\}$ is of class $C^1$, Lagrange theorem and~\eqref{step3-mod} yield,
for some $\theta\in [0,t]$,
\begin{align*}
& J(z+t H(\frac{z}{k_0})v)-J(z)=t J'(z+\theta H(\frac{z}{k_0})v) \Big(H(\frac{z}{k_0})v\Big) \\
&=t\int_\Omega \Big[ j_t(z+\theta  H(\frac{z}{k_0})v,|Dz+\theta D\big(H(\frac{z}{k_0})v\big)|)\frac{Dz+\theta  D\big(H(\frac{z}{k_0})v\big)}{|Dz+\theta D\big(H(\frac{z}{k_0})v\big)|}\cdot D \big(H(\frac{z}{k_0})v\big) \\
    &   + j_s(z+\theta  H(\frac{z}{k_0})v,|Dz+\theta  D\big(H(\frac{z}{k_0})v\big)|)H(\frac{z}{k_0})v\Big]dx \leq rt    ,
\end{align*}
for all $z\in B(u,\delta_1)\cap J^\eta$ and $t\in [0,\delta_1)$.
Let now $\delta\in (0,\delta_1]$
with $J(u)+\delta<\eta$, and define the continuous function
${\mathcal H}: B_\delta (u,J(u))\cap {\rm epi}(J)\times [0,\delta]\to W^{1,p}_0(\Omega)$
by setting
$$
{\mathcal H}((z,\mu),t)=z+t H(\frac{z}{k_0})v.
$$
Then, by direct computation, condition~\eqref{norma} in Definition~\ref{defbar}
is satisfied by~\eqref{prima3-mod}.
Notice that, for all $((z,\mu),t)\in B_\delta (u,J(u))\cap {\rm epi}(J)\times [0,\delta]$, we have
$z\in B(u,\delta_1)\cap J^\eta$ and $t\in [0,\delta_1)$. Hence, by the above inequality, we have
$$
J({\mathcal H}((z,\mu),t))\leq J(z) +rt\leq \mu+rt.
$$
whenever $(z,\mu)\in B_\delta(u,J(u))\cap {\rm epi}(J)$ and $t\in [0,\delta]$. Then, according to
Definition~\ref{defbar}, we can conclude that $\bar J_\eps^0(u;v)\leq r$.
By the arbitrariness of $r$, it follows that
$$
\bar J_\eps^0(u;v)\leq \int_\Omega j_t(u,|Du|)\frac{Du}{|Du|}\cdot Dv +
\int_\Omega j_s(u,|Du|)v.
$$
Hence, by the arbitrariness of $\eps$, we get
\begin{equation}
    \label{conclpreli}
\bar J^0(u;v)\leq \int_\Omega j_t(u,|Du|)\frac{Du}{|Du|}\cdot Dv +
\int_\Omega j_s(u,|Du|)v,
\end{equation}
for all $v\in V_u$, concluding the proof of assertion (i). Concerning (ii), if $\alpha\in \partial J(u)\subset W^{-1,p'}(\Omega)$,
by (i) it follows (recall~\cite[Corollary 4.7(i)]{campad} for the first inequality below)
$$
\langle\alpha,v\rangle\leq J^0(u;v)\leq \int_\Omega j_t(u,|Du|)\frac{Du}{|Du|}\cdot Dv +
\int_\Omega j_s(u,|Du|)v,
$$
for all $v\in V_u$. Since we can exchange $v$ with $-v$ we get
$$
\langle\alpha,v\rangle=\int_\Omega j_t(u,|Du|)\frac{Du}{|Du|}\cdot Dv +
\int_\Omega j_s(u,|Du|)v,
$$
for all $v\in V_u$. By density of $V_u$ in $W^{1,p}_0(\Omega)$, $\partial J(u)=\{\alpha\}$.
This concludes the proof.
\end{proof}

Finally, we have the following

\begin{proposition}
    \label{semic-lagrange}
    Assume~\eqref{j2}, \eqref{j3}, \eqref{j4}, \eqref{gg1} and \eqref{gg1-G}. Let ${\mathcal C}$
    be the constraint~\eqref{ilvincolo} and let ${\mathcal E}:W^{1,p}_0(\Omega)\to\R\cup\{+\infty\}$ be the functional
    $$
    {\mathcal E}(u)=\int_\Omega j(u,|Du|)-\int_\Omega F(|x|,u).
    $$
    Then the functional ${\mathcal E}^*:W^{1,p}_0(\Omega)\to\R\cup\{+\infty\}$ defined by
    $$
    {\mathcal E}^*(u)
    =
    \begin{cases}
    {\mathcal E}(u) & \text{for $u\in {\mathcal C}$}, \\
    +\infty & \text{for $u\not \in {\mathcal C}$},
    \end{cases}
    $$
     is lower semi-continuous on $W^{1,p}_0(\Omega)$. Moreover, for every solution
$u\in {\mathcal C}$ to problem~\eqref{minpb}
there exists a Lagrange multiplier $\lambda\in\R$ such that
\begin{equation*}
    \int_\Omega j_t(u,|Du|)\frac{Du}{|Du|}\cdot D \varphi +
    \int_\Omega j_s(u,|Du|)\varphi -\int_\Omega f(|x|,u)\varphi =\lambda \int_\Omega g(u)\varphi,
\end{equation*}
for all $\varphi\in V_u$.
\end{proposition}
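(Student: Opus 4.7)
The proof splits into the lower semi-continuity of $\mathcal{E}^*$ and the derivation of the Euler--Lagrange identity. \textbf{Lower semi-continuity.} The functional $u\mapsto\int_\Omega G(u)$ is continuous on $W^{1,p}_0(\Omega)$: by~\eqref{gg2-G} and the Sobolev embedding $W^{1,p}_0(\Omega)\hookrightarrow L^q(\Omega)$ for $p\leq q\leq p^*$, dominated convergence applies, and $\mathcal{C}$ is therefore strongly closed. By the same kind of estimate, $u\mapsto\int_\Omega F(|x|,u)$ is continuous, using~\eqref{gg2} together with H\"older's inequality on the weight $a\in L^{Np/(N(p-1)+p)}$ whose conjugate exponent equals $p^*$. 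Finally, $J$ is strongly sequentially lower semi-continuous on $W^{1,p}_0(\Omega)$: given $u_n\to u$ in $W^{1,p}_0(\Omega)$, a subsequence converges pointwise a.e.\ together with its gradients, and Fatou's lemma applies since $j\geq\alpha_0|\xi|^p\geq 0$ by~\eqref{j2}. Combining these gives the lower semi-continuity of $\mathcal{E}^*$.

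\textbf{Lagrange multiplier.} Since $\int_\Omega G(u)=1\neq 0=G(0)$ one has $u\not\equiv 0$, so~\eqref{nondegen} gives $g(u)\not\equiv 0$. Pick $\widetilde\psi\in C^\infty_c(\Omega)$ with $\int_\Omega g(u)\widetilde\psi\neq 0$ and, letting $H$ be the cut-off from~\eqref{defh}, set $\psi_k:=H(u/k)\widetilde\psi\in V_u$; since $\psi_k\to\widetilde\psi$ in $L^p(\Omega)\cap L^{p^*}(\Omega)$ and $g(u)$ lies in the corresponding dual space by~\eqref{gg1-G}, for $k$ large the function $\psi:=\psi_k\in V_u$ satisfies $\int_\Omega g(u)\psi\neq 0$. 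For an arbitrary $\varphi\in V_u$, set $\widehat\varphi:=\varphi-\frac{\int_\Omega g(u)\varphi}{\int_\Omega g(u)\psi}\,\psi\in V_u$, so that $\int_\Omega g(u)\widehat\varphi=0$. Since $\widehat\varphi,\psi\in V_u\cap L^\infty$, the scalar map $\Phi(t,s):=\int_\Omega G(u+t\widehat\varphi+s\psi)-1$ is of class $C^1$ near the origin ($u$ is bounded on the supports of $\widehat\varphi$ and $\psi$, so the argument of $g$ stays uniformly bounded). As $\partial_t\Phi(0,0)=0$ and $\partial_s\Phi(0,0)=\int_\Omega g(u)\psi\neq 0$, the implicit function theorem produces a $C^1$ function $s(\cdot)$ with $s(0)=0=s'(0)$, hence $s(t)=o(t)$, for which $\eta(t):=u+t\widehat\varphi+s(t)\psi\in\mathcal{C}$ when $|t|$ is small.

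Because $u$ minimizes $\mathcal{E}$ on $\mathcal{C}$, the real function $t\mapsto\mathcal{E}(\eta(t))$ is minimized at $t=0$. The crux is the differentiability of this function at $t=0$ with derivative
\[
\int_\Omega j_t(u,|Du|)\frac{Du}{|Du|}\cdot D\widehat\varphi+\int_\Omega j_s(u,|Du|)\widehat\varphi-\int_\Omega f(|x|,u)\widehat\varphi,
\]
whence this derivative is forced to vanish. The $F$-part of the expansion is immediate from the $C^1$-regularity of $u\mapsto\int_\Omega F(|x|,u)$ on $W^{1,p}_0(\Omega)$, guaranteed by~\eqref{gg1}. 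For the $J$-part, Proposition~\ref{derivate} gives $J(u+t\widehat\varphi)-J(u)=tJ'(u)\widehat\varphi+o(t)$, while splitting $\eta(t)=(u+t\widehat\varphi)+s(t)\psi$ and invoking the fundamental theorem of calculus on
\[
J(\eta(t))-J(u+t\widehat\varphi)=\int_0^1 J'\bigl(u+t\widehat\varphi+\tau s(t)\psi\bigr)\bigl(s(t)\psi\bigr)\,d\tau
\]
(legitimate since every intermediate function differs from $u$ by an element of $V_u$, and $\psi$ lies in the corresponding test spaces because the boundedness of $u$ on $\{\psi\neq 0\}$ is inherited) yields an $O(|s(t)|)=o(t)$ bound thanks to the majorants furnished by~\eqref{j3}--\eqref{j4}. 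Unwinding the definition of $\widehat\varphi$ and setting
\[
\lambda:=\frac{1}{\int_\Omega g(u)\psi}\Bigl(\int_\Omega j_t(u,|Du|)\frac{Du}{|Du|}\cdot D\psi+\int_\Omega j_s(u,|Du|)\psi-\int_\Omega f(|x|,u)\psi\Bigr)
\]
produces the stated Euler--Lagrange identity for every $\varphi\in V_u$. The main technical obstacle is precisely this $o(t)$ estimate for the $J$-remainder along the nonlinear curve $\eta$; the implicit function theorem step and the final algebraic manipulation leading to $\lambda$ are then routine.
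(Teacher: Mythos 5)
Your proof is correct, but it takes a genuinely different route from the paper for the Lagrange multiplier part. The paper works entirely inside the Campa--Degiovanni nonsmooth subdifferential framework: it observes that the weak slope $|d\mathcal{E}^*|(u)=0$ at a minimizer, deduces $0\in\partial\mathcal{E}^*(u)$ from \cite[Theorem 4.13(iii)]{campad}, invokes the abstract Lagrange multiplier rule \cite[Corollary 5.9(ii)]{campad} to get $0\in\partial J(u)+\partial V(u)+\R\,\partial W(u)$, and then appeals to Lemma~\ref{subdcaract}(ii) to identify $\partial J(u)$ with the expected integral expression on $V_u$. Your argument bypasses all of this abstract machinery: you construct a fixed $\psi\in V_u$ with $\int g(u)\psi\neq0$, remove the $g$-component of a test function $\varphi$ via $\widehat\varphi:=\varphi-\tfrac{\int g(u)\varphi}{\int g(u)\psi}\psi$, build an explicit $C^1$ curve $\eta(t)=u+t\widehat\varphi+s(t)\psi$ inside $\mathcal{C}$ by the implicit function theorem (using $\partial_t\Phi(0,0)=0$ to force $s(t)=o(t)$), and then differentiate $t\mapsto\mathcal{E}(\eta(t))$ at $t=0$ directly. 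The only nontrivial analytical input you need is Proposition~\ref{derivate} together with the majorants~\eqref{j3}--\eqref{j4}, applied along the straight segment from $u+t\widehat\varphi$ to $\eta(t)$ via the fundamental theorem of calculus to show the $s(t)\psi$ contribution to $J$ is $o(t)$ -- and your verification that the intermediate functions still admit $\psi$ as an element of their $V$-spaces (since boundedness on $\{\psi\neq0\}$ is inherited) is precisely the point that makes this legitimate. What each route buys: yours is more elementary and self-contained, entirely avoiding Lemma~\ref{subdcaract} and the subdifferential/epigraph apparatus of \cite{campad}; the paper's is more compact once that machinery is taken as given, proves the stronger fact that $\partial J(u)$ is a singleton, and fits naturally into the nonsmooth critical point theory that the rest of the paper already invokes. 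The lower semi-continuity part is the same standard argument in both, just spelled out more in yours.
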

\begin{proof}
    It is readily seen that ${\mathcal E}^*$ is lower semi-continuous,
    by conditions~\eqref{gg2} and~\eqref{gg2-G}.
Let $u\in {\mathcal C}$ be any solution to problem~\eqref{minpb}
(it is $J(u)<+\infty$, since ${\mathcal E}(u)=m<+\infty$).
Notice that ${\mathcal E}^*={\mathcal E}+{\rm I}_{{\mathcal C}}$,
being ${\rm I}_{{\mathcal C}}$ the indicator function of ${\mathcal C}$, ${\rm I}_{{\mathcal C}}(v)=0$ if $v\in {\mathcal C}$
and ${\rm I}_{{\mathcal C}}(v)=+\infty$ if $v\not \in {\mathcal C}$.
If $|d{\mathcal E}^*|(u)$ denotes the weak slope of ${\mathcal E}^*$
(see~\cite[Definition 2.1]{campad}), it follows $|d{\mathcal E}^*|(u)=0$, and then
$0\in\partial {\mathcal E}^*(u)$, by \cite[Theorem 4.13(iii)]{campad}.
Setting
$$
V(u)=-\int_\Omega F(|x|,u)dx,\quad\,
W(u)=\int_\Omega G(u)dx,
$$
$V,W$ are $C^1$ functionals, in light of~\eqref{gg1} and~\eqref{gg1-G}, and
$\partial V(u)=\{-f(|x|,u)\}$, $\partial W(u)=\{g(u)\}$. Moreover, by~\eqref{nondegen},
we can find $\hat v \in W^{1,p}_0(\Omega)$ such that
$\int_\Omega g(u)\hat v>0$ and, by density of $V_u$ in $W^{1,p}_0(\Omega)$, $v_+ \in V_u$ with
$\int_\Omega g(u)v_+>0$. Taking into account Proposition~\ref{derivate} and
conclusion (i) of Lemma~\ref{subdcaract}, $\bar J^0(u;v_+)<+\infty$
and $\bar J^0(u;-v_+)<+\infty$. Therefore, we are allowed to apply~\cite[Corollary 5.9(ii)]{campad}, yielding
$$
0\in  \partial {\mathcal E}(u)+\R \partial W(u)=\partial J(u)+\partial V(u)+\R\partial W(u),
$$
where the equality is justified by \cite[Corollary 5.3(ii)]{campad}, since $V$ is a $C^1$ functional.
Finally, since $\partial J(u)\neq\emptyset$, assertion (ii) of Lemma~\ref{subdcaract}
allows to conclude the proof.
\end{proof}

\medskip

\section{Proof of Theorem~\ref{mainthALL}}
\noindent
Let $u\in W^{1,p}_0(\Omega)$ be a given nonnegative solution to the minimum problem~\eqref{minpb}, namely
$$
j(u,|Du|)\in L^1(\Omega),\quad
\int_\Omega G(u)=1,\quad
m=\int_\Omega j(u,|Du|)-\int_\Omega F(|x|,u).
$$
We shall divide the proof into four steps.
\vskip4pt
\noindent
{\bf Step I (existence of approximating minimizers).}
In light of Proposition~\ref{convsymm}, we can find
a sequence $(u_n)\subset W^{1,p}_0(\Omega)$ of polarizations of $u$, namely $u_n=u^{H_1\cdots H_n}$, such that
$u_n\to u^*$ strongly in $L^p(\Omega)$, as $n\to\infty$. Furthermore, we learn
from~\eqref{fundineqpart} of Proposition~\ref{concretelem} that
    \begin{equation}
        \label{gradientpp}
    \|D u_n\|_{L^p(\Omega)}=\|D u\|_{L^p(\Omega)},\quad\text{for all $n\geq 1$}.
\end{equation}
In turn, the sequence $(u_n)$ is bounded in $W^{1,p}_0(\Omega)$ and, up to a subsequence,
it converges weakly to $u^*$ in $W^{1,p}_0(\Omega)$
(since $u_n\to u^*$ in $L^p(\Omega)$). Notice also that, again by virtue of Proposition~\ref{concretelem},
it follows that $j(u_n,|Du_n|)\in L^1(\Omega)$ for all $n\geq 1$ and
    \begin{equation}
        \label{energyconv}
    \int_\Omega j(u_n,|Du_n|)=\int_\Omega j(u,|Du|),\quad\text{for all $n\geq 1$},
\end{equation}
    as well as
    $$
    \int_\Omega F(|x|,u_n)\geq \int_\Omega F(|x|,u),\quad
    \int_\Omega G(u_n)=\int_\Omega G(u)=1,\quad\text{for all $n\geq 1$}.
    $$
    In particular $(u_n)$ is a sequence of minimizers for problem~\eqref{minpb}, since $(u_n)\subset {\mathcal C}$ and
    $$
    m\leq \int_\Omega j(u_n,|Du_n|)-\int_\Omega F(|x|,u_n)\leq \int_\Omega j(u,|Du|)-\int_\Omega F(|x|,u)=m.
    $$
    Furthermore, in light of Proposition~\ref{concretelem-star}, we have
    $\int_\Omega G(u^*)=\int_\Omega G(u)=1$ and
    $$
    \int_\Omega F(|x|,u^*)\geq \int_\Omega F(|x|,u),\quad
    \int_\Omega j(u^*,|D u^*|)\leq \int_\Omega j(u,|D u|),
    $$
    so that we obtain
    $$
    m\leq \int_\Omega j(u^*,|Du^*|)-\int_\Omega F(|x|,u^*)\leq \int_\Omega j(u,|Du|)-\int_\Omega F(|x|,u)=m,
    $$
    yielding that $u^*$ is a minimizer for~\eqref{minpb} too, and
    \begin{equation*}
    \int_\Omega j(u^*,|D u^*|)=\int_\Omega j(u,|D u|).
    \end{equation*}
    In conclusion, by~\eqref{energyconv}, we get
    \begin{equation}
            \label{energyconv-final}
        \int_\Omega j(u_n,|Du_n|)=\int_\Omega j(u^*,|Du^*|),\quad\text{for all $n\geq 1$}.
    \end{equation}
    By Proposition~\ref{semic-lagrange}, there exists a
    sequence $(\lambda_n)\subset\R$ of Lagrange multipliers such that
\begin{equation}
    \label{equEL}
    \int_\Omega j_t(u_n,|Du_n|)\frac{Du_n}{|Du_n|}\cdot D \varphi +
    \int_\Omega j_s(u_n,|Du_n|)\varphi -\int_\Omega f(|x|,u_n)\varphi =\lambda_n \int_\Omega g(u_n)\varphi ,
\end{equation}
    for all $n\geq 1$ and any $\varphi\in V_{u_n}$.
    \vskip6pt
    \noindent
    {\bf Step II (boundedness of $\boldsymbol{\lambda_n}$).}
    We claim that $(\lambda_n)$ is bounded in $\R$.
        To prove this, observe first that there exist $\hat v\in C^\infty_c(\Omega)$ and $\hat h\geq 1$ such that
        \begin{equation}
            \label{lambdalim}
        \lim_{n}\int_{\Omega} g(u_n)H\Big(\frac{u_n}{\hat h}\Big)\hat v\not= 0,
    \end{equation}
being $H$ the cut-off function defined in~\eqref{defh}.
If this was not the case, for all $v\in C^\infty_c(\Omega)$ and any $h\geq 1$, we would find
        $$
        \int_{\Omega} g(u^*)H\Big(\frac{u^*}{h}\Big)v=
        \lim_{n}\int_{\Omega} g(u_n)H\Big(\frac{u_n}{h}\Big)v=0,
        $$
        by dominated convergence. By the arbitrariness of $h\geq 1$ and dominated convergence, we get
        $$
        \int_{\Omega} g(u^*)v=0,\quad\text{for all $v\in C^\infty_c(\Omega)$},
        $$
        so that $g(u^*)=0$ a.e.~in $\Omega$. This is a contradiction, as $\int_\Omega G(u^*)=\int_\Omega G(u)=1$
        implies that $u^*\not \equiv 0$ which, by assumption~\eqref{nondegen}, yields $g(u^*)\not \equiv 0$.
        Observe now that, for every $v\in C^\infty_c(\Omega)$ and any $h\geq 1$, the function
        $H(\frac{u_n}{h})v$ belongs to $V_{u_n}$ and thus it
        is an admissible test function for~\eqref{equEL}.
Therefore, if $\hat v\in C^\infty_c(\Omega)$ and $\hat h\geq 1$ are as in formula~\eqref{lambdalim},
inserting $\varphi=H(\frac{u_n}{\hat h})\hat v$ into~\eqref{equEL}, we reach the identity
        \begin{equation}
            \label{lambdaid}
        \lambda_n\int_{\Omega} g(u_n)H\Big(\frac{u_n}{\hat h}\Big)\hat v=\sum_{i=1}^3 I_i^n,
    \end{equation}
        where, denoted by $K$ the support of $\hat v$, we have set
        \begin{align*}
        I_1^n& :=\int_{K} H\Big(\frac{u_n}{\hat h}\Big)j_t(u_n,|D
        u_n|)\frac{Du_n}{|Du_n|}\cdot D \hat v,   \\
        I_2^n &:=\int_{K} \left[H\Big(\frac{u_n}{\hat h}\Big)j_s(u_n,|D u_n|)+
        H'\Big(\frac{u_n}{\hat h}\Big)j_t(u_n,|D u_n|)\frac{|D u_n|}{\hat h}\right]\hat v, \\
        I_3^n &:=-\int_{K} f(|x|,u_n)H\Big(\frac{u_n}{\hat h}\Big)\hat v.
    \end{align*}
    In turn, taking into account the growths~\eqref{j3},~\eqref{j4} and~\eqref{gg1}, it follows
    \begin{align*}
    |I_1^n| &\leq C\int_K |D u_n|^{p-1}|D \hat v|\leq C\Big(\int_\Omega |D u_n|^p\Big)^{\frac{p-1}{p}}\leq C, \\
    |I_2^n| &\leq C\int_K |D u_n|^p|\hat v|\leq C \int_\Omega |D u_n|^p\leq C, \\
    |I_3^n| &\leq \int_K (a(|x|)+C)|\hat v|\leq C,
    \end{align*}
    for some constant $C=C(\hat h)$, changing from one line to the next and independent of $n$.
    Then the claim follows by combining~\eqref{lambdalim} and~\eqref{lambdaid} and $(\lambda_n)$
    admits a convergent subsequence.
    \vskip3pt
    \noindent
    {\bf Step III (pointwise convergence).}
    In this step we prove that, up to a subsequence,
    \begin{equation}\label{convun2}
    D u_n(x)\to D u^*(x),\qquad \text{for a.e.\ $x\in\Omega$}.
    \end{equation}
    Let $\Omega_0$ be a fixed bounded subdomain of $\Omega$ (let $\Omega_0=\Omega$ if $\Omega$ is a ball). We already know that
    \begin{equation}\label{weakustarbdd}
    u_n\rightharpoonup u^*\quad\text{weakly in $W^{1,p}(\Omega_0)$}.
\end{equation}
    As we have already noticed,
    for all $h\geq 1$ and $v\in C^\infty_c(\Omega_0)$,
    the function $H(\frac{u_n}{h})v$ belongs to the space $V_{u_n}$.
    Therefore, inserting it into~\eqref{equEL}, we reach
    \begin{gather*}
    \int_{\Omega_0} H\Big(\frac{u_n}{h}\Big)j_t(u_n,|D
    u_n|)\frac{Du_n}{|Du_n|}\cdot D v\\
    =-\int_{\Omega_0} \left[H\Big(\frac{u_n}{h}\Big)j_s(u_n,|D u_n|)+
    H'\Big(\frac{u_n}{h}\Big)j_t(u_n,|D u_n|)\frac{|D u_n|}{h}\right]v\\
    +\int_{\Omega_0} f(|x|,u_n)H\Big(\frac{u_n}{h}\Big)v+\lambda_n\int_{\Omega_0} g(u_n)H\Big(\frac{u_n}{h}\Big)v,
    \end{gather*}
    for all $v\in C^\infty_c(\Omega_0)$. This equality can be read as
\begin{equation*}
    \int_{\Omega_0} b_n(x,Du_n) \cdot D v\\
    =\langle \Phi_n,v \rangle+\langle \mu_n,v\rangle,\quad \forall v\in C^\infty_c(\Omega_0),
\end{equation*}
where we have set
\begin{align}
    b_n(x,\xi) &:=H\Big(\frac{u_n(x)}{h}\Big)j_t(u_n(x),|\xi|)\frac{\xi}{|\xi|},\quad\text{for a.e.~$x\in\Omega_0$ and all $\xi\in\R^N$,}  \notag \\
    \label{misradon}
\langle \mu_n,v\rangle &:=
    -\int_{\Omega_0}\left[H'\Big(\frac{u_n}{h}\Big)j_t(u_n,|D u_n|)\frac{|D
    u_n|}{h}+
    H\Big(\frac{u_n}{h}\Big)j_s(u_n,|D u_n|)\right]v,   \\
\langle \Phi_n,v \rangle &:=
    \int_{\Omega_0} f(|x|,u_n)H\Big(\frac{u_n}{h}\Big)v+\lambda_n\int_{\Omega_0} g(u_n)H\Big(\frac{u_n}{h}\Big)v, \notag
\end{align}
for every $v\in C^\infty_c(\Omega_0)$. Set also
\begin{align*}
b(x,\xi) &:=H\Big(\frac{u^*(x)}{h}\Big)j_t(u^*(x),|\xi|)\frac{\xi}{|\xi|},\quad\text{for a.e.~$x\in\Omega_0$ and all $\xi\in\R^N$},  \\
\langle \Phi,v \rangle &:=
    \int_{\Omega_0} f(|x|,u^*)H\Big(\frac{u^*}{h}\Big)v+\lambda\int_{\Omega_0} g(u^*)H\Big(\frac{u^*}{h}\Big)v,
    \quad \forall v\in C^\infty_c(\Omega_0),
\end{align*}
where $\lambda$ denotes the limit of $(\lambda_n)$, according to Step II.
Notice that $(\Phi_n)\subset W^{-1,p'}(\Omega_0)$ and $(\mu_n)$ defines a sequence
of Radon measures on $\Omega_0$.
Taking into account the strict convexity and monotonicity of $\{t\mapsto j(s,t)\}$
and the growth conditions~\eqref{j3}-\eqref{j4}, we claim that the operators
$b,b_n$ satisfy the following properties:
\begin{align}
    \label{1}
    & (b_n(x,\xi)-b_n(x,\xi'))\cdot(\xi-\xi')\geq 0,
    \quad\text{a.e.~$x\in\Omega_0$, for all $\xi,\xi'\in\R^N$,} \\
        \label{2}
    & (b(x,\xi)-b(x,\xi'))\cdot(\xi-\xi')>0,
    \quad\text{a.e.~$x\in\Omega_0$ with $u^*(x)\leq h$, for all $\xi\neq\xi'$}, \\
        \label{3}
&   b_n(x,\cdot)\to b(x,\cdot)\,\,\,\text{as $n\to\infty$},
\quad\text{a.e.~$x\in\Omega_0$, uniformly over compact sets of $\R^N$}, \\
    \label{4}
&   b_n(x,Du_n) \quad\text{is bounded in $L^{p'}(\Omega_0,\R^N)$}, \\
    \label{5}
&   b_n(x,Du^*)\to b(x,Du^*)\,\,\,\text{as $n\to\infty$},
\quad\text{strongly in $L^{p'}(\Omega_0,\R^N)$},  \\
\label{5piu}
& \mu_n \rightharpoonup \mu\,\,\,\text{as $n\to\infty$},
\quad\text{weakly* in measure, for some Radon measure $\mu$},  \\
\label{6}
&  \Phi_n \to \Phi\,\,\text{as $n\to\infty$},
\quad\text{strongly in $W^{-1,p'}(\Omega_0)$}.
\end{align}
Properties~\eqref{1} and~\eqref{2} follow from the strict convexity of the map $\{\xi\mapsto j(s,|\xi|)\}$
and the definition of $H$.
Concerning~\eqref{3}, given $x\in\Omega_0$ and a compact $K\subset\R^N$, again
by the definition of $H$ and the continuity of $j_t,j_{st}$, for all $\xi\in K$
and all $n\geq 1$ large,
\begin{align*}
    & |b_n(x,\xi)- b(x,\xi)|  \leq \big|H\big(\frac{u_n(x)}{h}\big)-H\big(\frac{u^*(x)}{h}\big)\big||j_t(u_n(x),|\xi|)| \\
& +H\big(\frac{u^*(x)}{h}\big)|j_t(u_n(x),|\xi|)-j_t(u^*(x),|\xi|)| \\
& \leq \sup_{s\in [0,2h+2],\xi\in K} |j_t(s,|\xi|)|\big|H\big(\frac{u_n(x)}{h}\big)-H\big(\frac{u^*(x)}{h}\big)\big| \\
& +H\big(\frac{u^*(x)}{h}\big)|j_{st}(\tau u_n(x)+(1-\tau)u^*(x),|\xi|)||u_n(x)-u^*(x)| \\
& \leq \sup_{s\in [0,2h+2],\xi\in K} |j_t(s,|\xi|)|\times\big|H\big(\frac{u_n(x)}{h}\big)-H\big(\frac{u^*(x)}{h}\big)\big| \\
& +\sup_{s\in [0,2h+1],\xi\in K} |j_{st}(s,|\xi|)|\times |u_n(x)-u^*(x)| \\
& \leq C_{h,K}\Big\{\big|H\big(\frac{u_n(x)}{h}\big)-H\big(\frac{u^*(x)}{h}\big)\big|+|u_n(x)-u^*(x)|\Big\},
\end{align*}
which yields the assertion. Conclusion~\eqref{4} follows from the inequality
$$
|b_n(x,Du_n)|^{p'}=\Big|H\Big(\frac{u_n(x)}{h}\Big)j_t(u_n(x),|Du_n|)\Big|^{p'}\leq (\gamma (2h))^{p'}|Du_n|^p,
$$
for a.e.~$x\in\Omega_0$.
Moreover, since $b_n(x,Du^*(x))\to b(x,Du^*(x))$ a.e.~in $\Omega_0$ and
$$
|b_n(x,Du^*)|^{p'}=\Big|H\Big(\frac{u_n(x)}{h}\Big)j_t(u_n(x),|Du^*|)\Big|^{p'}\leq (\gamma (2h))^{p'}|Du^*|^p,
$$
for a.e.~$x\in\Omega_0$,
\eqref{5} holds as well by dominated convergence. Concerning~\eqref{5piu},
it can be easily verified that the square bracket in~\eqref{misradon}
is a bounded sequence in $L^1(\Omega_0)$ (just argue as in the estimation
    of the $I_i^n$s), so that, up to a subsequence, the property holds.
    Let us now prove that~\eqref{6} holds. In fact,
    for all $v\in W^{1,p}_0(\Omega_0)$ such that $\|v\|_{W^{1,p}(\Omega_0)}\leq 1$, we have
    \begin{align*}
    |\langle \Phi_n-\Phi,v \rangle| &\leq C
    \|f(|x|,u_n)H\big(\frac{u_n}{h}\big)-f(|x|,u^*)H\big(\frac{u^*}{h}\big)\|_{L^\frac{p^*}{p^*-1}(\Omega_0)} \\
    &+C\lambda_n\|g(u_n)H\big(\frac{u_n}{h}\big)-g(u^*)H\big(\frac{u^*}{h}\big)\|_{L^\frac{p^*}{p^*-1}(\Omega_0)} \\
    &+C|\lambda_n-\lambda|\|g(u^*)H\big(\frac{u^*}{h}\big)\|_{L^\frac{p^*}{p^*-1}(\Omega_0)}.
    \end{align*}
    Since $f(|x|,u_n)H\big(\frac{u_n}{h}\big)-f(|x|,u^*)H\big(\frac{u^*}{h}\big)\to 0$ and
    $g(u_n)H\big(\frac{u_n}{h}\big)-g(u^*)H\big(\frac{u^*}{h}\big)\to 0$ as $n\to\infty$, a.e.~in $\Omega_0$,
    $\lambda_n\to\lambda$ as $n\to\infty$ and, by the growth assumptions~\eqref{gg1} and~\eqref{gg1-G},
    \begin{align*}
    & |f(|x|,u_n)H\big(\frac{u_n}{h}\big)-f(|x|,u^*)H\big(\frac{u^*}{h}\big)|^\frac{p^*}{p^*-1}\leq C_h\, a^\frac{p^*}{p^*-1}(|x|)+C_h',
    \quad\text{a.e.~in $\Omega_0$},\\
    & |g(u_n)H\big(\frac{u_n}{h}\big)-g(u^*)H\big(\frac{u^*}{h}\big)|^\frac{p^*}{p^*-1}\leq C_h{''},
    \quad\text{a.e.~in $\Omega_0$},
\end{align*}
    for some $C_h,C_h',C_h^{''}>0$, we obtain the
    property by taking the supremum on $v$ and dominated convergence.
    Therefore, in light of~\eqref{weakustarbdd} and~\eqref{1}-\eqref{6},
    we can apply~\cite[Theorem 5]{masmur}, yielding the almost everywhere
    convergence of the gradients $Du_n$ to $Du^*$ on the set
    $$
    E_{h,\Omega_0}=\{x\in \Omega_0: u^*(x)\leq h\}.
    $$
    We deduce the pointwise convergence~\eqref{convun2} by the arbitrariness of $h\geq 1$ and $\Omega_0\subset\Omega$.
\vskip4pt
\noindent
{\bf Step IV (proof of the theorem concluded).} In view of~\eqref{convun2} and~\eqref{j2}, we have
\begin{align*}
    & j(u_n,|D u_n|)-\alpha_0 |D u_n|^p\geq 0,\qquad\text{for all $n\geq 1$ and a.e.~in $\Omega$},  \\
    & j(u_n,|D u_n|)-\alpha_0 |D u_n|^p\to j(u^*,|D u^*|)-\alpha_0 |D u^*|^p,\quad\text{as $n\to\infty$, a.e.~in $\Omega$}.
\end{align*}
    Taking into account~\eqref{energyconv-final}, by Fatou's lemma, we get
    $$
    \limsup_n\int_\Omega |D u_n|^p\leq \int_\Omega |D u^*|^p.
    $$
    Since $(u_n)$ converges to $u^*$ strongly in $L^p(\Omega)$
    and weakly in $W^{1,p}_0(\Omega)$, we can conclude that $u_n\to u^*$ strongly in $W^{1,p}_0(\Omega)$, as $n\to\infty$.
    Taking the limit into~\eqref{gradientpp} we reach
    \begin{equation*}
    \|Du\|_{L^p(\Omega)}=\|Du^*\|_{L^p(\Omega)}.
\end{equation*}
    Then, by~Proposition~\ref{identityc}, there is a translate of $u^*$ which is almost
    everywhere equal to $u$.
\qed

\vskip10pt
\noindent
{\bf Acknowledgment.} The second author thanks Marco Degiovanni for a helpful discussion
about the proof of Lemma~\ref{subdcaract}.

\bigskip


\begin{thebibliography}{99}

\bibitem{baer}
{\sc A.~Baernstein},
A unified approach to symmetrization,
Symp. Math., Cambridge Press, 1994.

\bibitem{bandle}
{\sc C.\ Bandle},
Isoperimetric inequalities and applications,
Monographs Studies Math.\ Pitman, 1980.

\bibitem{berlio}
{\sc H.~Berestycki, P.L.~Lions},
Nonlinear scalar field equations. I Existence of a ground state,
{\em Arch. Rat. Mech. Anal.} {\bf 82} (1983), 313--345.

\bibitem{brezis}
{\sc H.~Brezis},
Symmetry in nonlinear PDE's,
{\em Proc.~Sympos.~Pure Math.} {\bf 65} (1999) 1--12.

\bibitem{brock0}
{\sc F.~Brock},
Positivity and radial symmetry of solutions to some variational problems in $\R^N$,
{\em J. Math. Anal. Appl.} {\bf 296} (2004), 226--243.

\bibitem{brock}
{\sc F.~Brock, A.~Solynin},
An approach to symmetrization via polarization,
{\em Trans. Amer. Math. Soc.} {\bf 352} (2000), 1759--1796.

\bibitem{bzim}
{\sc J.E.\ Brothers, W.P.\ Ziemer},
Minimal rearrangements of Sobolev functions,
{\em J.\ Reine Angew.\ Math.}  {\bf 384} (1988), 153--179.

\bibitem{campad}
{\sc I.~Campa, M.~Degiovanni},
Subdifferential calculus and nonsmooth critical point theory,
{\em SIAM J. Optim.} {\bf 10} (2000), 1020--1048.

\bibitem{masmur}
{\sc G.\ Dal Maso, F.\ Murat},
Almost everywhere convergence of
gradients of solutions to nonlinear elliptic systems,
{\em Nonlinear Anal.} {\bf 31} (1998), 405--412.

\bibitem{degzan}
{\sc M.\ Degiovanni, S.\ Zani},
Euler equations involving nonlinearities without growth conditions,
{\em Potential Anal.} {\bf 5} (1996), 505--512.

\bibitem{est}
{\sc M.~Esteban},
Nonsymmetric ground states of symmetric variational problems,
{\em Comm. Pure Appl. Math.} {\bf 44} (1991), 259--274.

\bibitem{FV}
{\sc A.~Ferone, R.~Volpicelli},
Minimal rearrangements of Sobolev functions: a new proof,
{\em Ann. Inst. H. Poincar\'e Anal. Non Lin\'eaire} {\bf 20} (2003), 333--339.

\bibitem{FV1}
{\sc A.~Ferone, R.~Volpicelli},
Convex rearrangement: equality cases in the P\'olya-Szeg\"o inequality,
{\em Calc. Var. Partial Differential Equations} {\bf 21} (2004), 259--272.

\bibitem{fmc}
{\sc A.~Friedman, B.~McLeod},
Strict inequalities for integrals of decreasingly rearranged functions,
{\em Proc.~Roy.~Soc.~Edinburgh Sect.~A} {\bf 102} (1986), 277--289.

\bibitem{gnn}
{\sc B.~Gidas, W.M.~Ni, L.~Nirenberg},
Symmetry and related properties via the maximum principle,
{\em Comm.~Math.~Phys.} {\bf 68} (1979), 209--243, and
Symmetry of positive solutions of nonlinear elliptic equations in $\R^{n}$.
Math.~Anal~Appl Part A, 369--402, Adv. Math. Suppl. Stud.,
7a, Academic Press, 1981.

\bibitem{hichem-rse}
{\sc H.~Hajaiej},
Cases of equality and strict inequality in the extended Hardy-Littlewood inequalities,
{\em Proc. Roy. Soc. Edinburgh Sect. A} {\bf 135} (2005), 643--661.

\bibitem{HSq}
{\sc H.\ Hajaiej, M.\ Squassina},
Generalized P\'olya-Szeg\"o inequality and applications to some quasi-linear elliptic problems,
{\em Comm.\ Partial Differential Equations}, to appear.

\bibitem{HSt}
{\sc H.\ Hajaiej, C.A.\ Stuart},
Symmetrization inequalities for composition operators of Carath\'eodory type,
{\em Proc. London Math. Soc.} {\bf 87} (2003), 396--418.

\bibitem{hlp}
{\sc G.H.~Hardy, J.E.~Littlewood, G.~P\'olya}, Inequalities.
Reprint of the 1952 edition. Cambridge University Press, Cambridge, 1988.

\bibitem{kawohl}
{\sc B.\ Kawohl},
On rearrangements, symmetrization and maximum principles,
Lecture Notes Math.\ {\bf 1150}, Springer, Berlin, 1985.

\bibitem{kawohl-int}
{\sc B.\ Kawohl},
Symmetry or not?
{\em Math. Intelligencer} {\bf 20} (1998), 16--22.

\bibitem{lions1}
{\sc P.-L. Lions},
The concentration-compactness principle in the calculus of variations. The locally compact case. I.
{\em Ann. Inst. H. Poincar\'e Anal. Non Lin\'eaire} {\bf 1} (1984), 109--145.

\bibitem{lions2}
{\sc P.-L. Lions},
The concentration-compactness principle in the calculus of variations. The locally compact case. II.
{\em Ann. Inst. H. Poincar\'e Anal. Non Lin\'eaire} {\bf 1} (1984), 223--283.

\bibitem{lopes}
{\sc O.~Lopes}, Radial symmetry of minimizers for some translation
and rotation invariant functionals,
{\em J.~Differential Equations} {\bf 124} (1996), 378--388.

\bibitem{maris}
{\sc M.~Mihai},
On the symmetry of minimizers,
{\em Arch. Ration. Mech. Anal.} {\bf 192} (2009), 311--330.


\bibitem{mossino}
{\sc J.\ Mossino},
In\'egalit\'es isop\'erim\'etriques et applications en physique,
Hermann, Paris, 1984.


\bibitem{polya}
{\sc G.\ P\'olya, G.\ Szeg\"o},
Isoperimetric Inequalities in Mathematical Physics.
Annals of Mathematics Studies, no. 27, Princeton University Press, Princeton, 1951.

\bibitem{sq-monograph}
{\sc M.\ Squassina},
Existence, multiplicity, perturbation, and concentration results for a
class of quasi-linear elliptic problems,
{\em Electron. J. Differential Equations}, Monograph {\bf 7}
2006, 213 pages, Texas State University of San Marcos, Texas, USA.

\bibitem{squassi-radial}
{\sc M. Squassina},
Radial symmetry of minimax critical points for non-smooth functionals,
preprint.

\bibitem{jvsh1}
{\sc J.\ Van Schaftingen},
Symmetrization and minimax principles,
{\em Commun. Contemp. Math.} {\bf 7} (2005), 463--481.

\bibitem{jvsh2}
{\sc J.\ Van Schaftingen},
Universal approximation of symmetrizations by polarizations,
{\em Proc. Amer. Math. Soc.} {\bf 134} (2006), 177--186

\bibitem{visi}
{\sc A.\ Visintin},
Strong convergence results related to strict convexity,
{\em Comm.\ Partial Differential Equations} {\bf 9} (1984), 439--466.


\end{thebibliography}
\end{document}